\providecommand{\U}[1]{\protect\rule{.1in}{.1in}}
\newtheorem{theorem}{Theorem}
\newtheorem{corollary}[theorem]{Corollary}
\newtheorem{lemma}[theorem]{Lemma}
\newtheorem{proposition}[theorem]{Proposition}
\newtheorem{remark}[theorem]{Remark}
\newenvironment{proof}[1][Proof]{\noindent\textbf{#1.} }{$\hfill\Box$\vspace*{.8cm}}
\begin{document}

\title{On the resonant Lane-Emden problem for the $p$-Laplacian}
\author{Grey Ercole{\small \textbf{ }}\thanks{The author thanks the support of FAPEMIG
and CNPq, Brazil.}\\{\small \textit{Departamento de Matem\'{a}tica - ICEx, Universidade Federal de
Minas Gerais,}}\\{\small \textit{Av. Ant\^{o}nio Carlos 6627, Caixa Postal 702, 30161-970, Belo
Horizonte, MG, Brazil. E-mail: grey@mat.ufmg.br }} }
\maketitle

\begin{abstract}
\noindent We study the positive solutions of the Lane-Emden equation
$-\Delta_{p}u=\lambda_{p}\left\vert u\right\vert ^{q-2}u$ in $\Omega$ with
homogeneous Dirichlet boundary conditions, where $\Omega\subset\mathbb{R}^{N}$
is a bounded and smooth domain, $N\geq2,$ $\lambda_{p}$ is the first
eigenvalue of the $p$-Laplacian operator $\Delta_{p}$ and $q$ is close to
$p>1.$ We prove that any family of positive solutions of this problem
converges in $C^{1}(\overline{\Omega})$ to the function $\theta_{p}e_{p}$ when
$q\rightarrow p,$ where $e_{p}$ is the positive and $L^{\infty}$-normalized
first eigenfunction of the $p$-Laplacian and $\theta_{p}:=\exp\left(
\left\Vert e_{p}\right\Vert _{L^{p}(\Omega)}^{-p}\int_{\Omega}e_{p}%
^{p}\left\vert \ln e_{p}\right\vert dx\right)  .$ A consequence of this result
is that the best constant of the immersion $W_{0}^{1,p}(\Omega)\hookrightarrow
L^{q}(\Omega)$ is differentiable at $q=p.$ Previous results on the asymptotic
behavior (as $q\rightarrow p$) of the positive solutions of the non-resonant
Lane-Emden problem (i.e. with $\lambda_{p}$ replaced by a positive
$\lambda\neq\lambda_{p}$) are also generalized to the space $C^{1}%
(\overline{\Omega})$ and to arbitrary families of these solutions. Moreover,
if $u_{\lambda,q}$ denotes a solution of the non-resonant problem for an
arbitrarily fixed $\lambda>0,$ we show how to obtain the first eigenpair of
the $p$-Laplacian as the limit in $C^{1}(\overline{\Omega}),$ when
$q\rightarrow p$, of a suitable scaling of the pair $(\lambda,u_{\lambda,q}).$
For computational purposes the advantage of this approach is that $\lambda$
does not need to be close to $\lambda_{p}.$ Finally, an explicit estimate
involving $L^{\infty}$ and $L^{1}$ norms of $u_{\lambda,q}$ is also deduced
using set level techniques.\newline 

{\small \noindent\textbf{Keywords}. asymptotic behavior, best constant,
blow-up technique, first eigenpair, ground states, Lane-Emden, Picone's
inequality, }$p${\small -Laplacian.}

\end{abstract}

\section{Introduction}

Consider the Lane-Emden problem
\begin{equation}
\left\{
\begin{array}
[c]{rcll}%
-\Delta_{p}u & = & \lambda\left\vert u\right\vert ^{q-2}u & \text{in }%
\Omega,\\
u & = & 0 & \text{on }\partial\Omega,
\end{array}
\right.  \label{Lane-Emden}%
\end{equation}
where $\lambda>0,$ $\Omega\subset\mathbb{R}^{N}$ is a bounded and smooth
domain, $N\geq2,$ $\Delta_{p}u:=\operatorname{div}\left(  \left\vert \nabla
u\right\vert ^{p-2}\nabla u\right)  $ is the $p$-Laplacian operator with
$p>1,$ and $1<q<p^{\ast},$ with $p^{\ast}$ denoting the Sobolev critical
exponent defined by $p^{\ast}=Np/\left(  N-p\right)  ,$ if $1<p<N,$ and
$p^{\ast}=\infty,$ if $p\geqslant N.$

If $q=p$, we have the $p$-Laplacian eigenvalue problem
\begin{equation}
\left\{
\begin{array}
[c]{rcll}%
-\Delta_{p}u & = & \lambda\left\vert u\right\vert ^{p-2}u & \text{in }\Omega\\
u & = & 0 & \text{on }\partial\Omega
\end{array}
\right.  \label{peigen}%
\end{equation}
whose first eigenvalue $\lambda_{p}$ is positive, simple, isolated and admits
a first positive eigenfunction $e_{p}\in C^{1,\alpha}\left(  \overline{\Omega
}\right)  $ satisfying $\left\Vert e_{p}\right\Vert _{\infty}=1$ in $\Omega.$
(We maintain this notation from now on.) Moreover, $\lambda_{p}$ also is
characterized by the minimizing property%
\begin{equation}
\lambda_{p}=\min\left\{  \frac{\int_{\Omega}\left\vert \nabla u\right\vert
^{p}dx}{\int_{\Omega}\left\vert u\right\vert ^{p}dx}:\text{ \ }u\in
W_{0}^{1,p}\left(  \Omega\right)  \setminus\{0\}\right\}  =\frac{\int_{\Omega
}\left\vert \nabla e_{p}\right\vert ^{p}dx}{\int_{\Omega}\left\vert
e_{p}\right\vert ^{p}dx}. \label{lambdamin}%
\end{equation}

We recall that $u\in W_{0}^{1,p}\left(  \Omega\right)  $ is a weak solution of
(\ref{Lane-Emden}) if, and only if,
\begin{equation}
\int_{\Omega}\left\vert \nabla u\right\vert ^{p-2}\nabla u\cdot\nabla\varphi
dx=\lambda\int_{\Omega}\left\vert u\right\vert ^{q-2}u\varphi dx\text{ \ for
all }\varphi\in W_{0}^{1,p}\left(  \Omega\right)  . \label{qweak}%
\end{equation}
This means that $u$ is a critical point of the energy functional
$J_{\lambda,q}:W_{0}^{1,p}(\Omega)\rightarrow\mathbb{R}$ given by%
\[
J_{\lambda,q}\left(  v\right)  =\frac{1}{p}\int_{\Omega}\left\vert \nabla
v\right\vert ^{p}dx-\frac{\lambda}{q}\int_{\Omega}\left\vert v\right\vert
^{q}dx.
\]

In the super-linear case $p<q<p^{\ast}$ the existence of at least one positive
weak solution $u_{\lambda,q}$ of (\ref{Lane-Emden}) with the least energy
$J_{\lambda,q}$ among all possible nontrivial weak solutions follows from
standard variational methods. Weak positive solutions satisfying this
minimizing property are known as ground states. As shown in
\cite{Garcia-Peral} non-uniqueness of positive weak solutions occurs for
ring-shaped domains when $q$ is close to $p^{\ast}$ (see also \cite{Drabek}).
On the other hand, when $\Omega$ is a ball, there exists only one positive
weak solution (see \cite{AY}). For the Laplacian ($p=2$) and a general bounded
domain, uniqueness happens if $q$ is sufficiently close to $2$ (see
\cite[Lemma 1]{Dancer}).

In the sub-linear case $1<q<p$ the existence of a positive weak solution
follows from the sub- and super-solution method or from standard variational
arguments concerning the global minimum of the energy functional
$J_{\lambda,q}$ in $W_{0}^{1,p}(\Omega).$ The uniqueness of such a weak
positive solution follows from \cite{DiazSaa} where a more general result is proved.

In both cases a proof of existence by applying the subdifferential method can
be found in \cite{Otani}, where one can also find the proof of the boundedness
(in the sup norm) of any positive weak solution of (\ref{Lane-Emden}), a
result which implies the $C^{1,\alpha}(\overline{\Omega})$-regularity by
applying well-known estimates (see \cite{DiBenedetto, Lieberman, Tolks}).

With different goals, asymptotics of solutions of the Lane-Emdem problem
(\ref{Lane-Emden}) has been studied by many authors since the 1990s. For
example, in \cite{Garcia-Peral} for $p>N,$ $\lambda=1$ and $q\rightarrow
p^{\ast}$; or in \cite{Wei} for $p=N,$ $\lambda=1$ and $q\rightarrow\infty$.
In \cite{GP}, the asymptotic behavior in $W_{0}^{1,p}(\Omega)$ of the positive
ground state solutions $v_{\lambda,q},$ as $q\rightarrow p^{+},$ was described
for all positive values of $\lambda.$ In that paper $v_{\lambda,q}$ was
obtained as the minimum of $J_{\lambda,q}$ on the positive Nehari manifold.
More recently, the asymptotic behavior with $q\rightarrow p^{-}$ in
$W_{0}^{1,p}(\Omega)$ was described in \cite{Anello}. Some these asymptotics
on the non-resonant problem (that is, $0<\lambda\neq\lambda_{p}$) had already
appeared in \cite{Huang}.

However, up to our knowledge, only in \cite{GP} and \cite{Anello} the resonant
problem was dealt with, but the asymptotic behavior of its positive solutions
was not fully determined. Indeed, although the families of solutions were
known to have a subsequence converging in $W_{0}^{1,p}(\Omega)$ to a multiple
of $e_{p},$ this multiple was unknown; in principle, different multiples of
$e_{p}$ could be obtained as limits of different subsequences these families.
Moreover, in the super-linear case, except for ground state families, nothing
was known about the asymptotic behavior (as $q\rightarrow p^{+}$) of other
(eventually existing) families of positive solutions.

In the present paper we first consider the resonant Lane-Emdem problem
\begin{equation}
\left\{
\begin{array}
[c]{rcll}%
-\Delta_{p}u & = & \lambda_{p}\left\vert u\right\vert ^{q-2}u & \text{in
}\Omega,\\
u & = & 0 & \text{on }\partial\Omega,
\end{array}
\right.  \label{Resonant}%
\end{equation}
and an arbitrary family $\left\{  u_{q}\right\}  _{q\in\lbrack1,p)\cup
(p,p^{\ast})}$ of positive solutions (not necessarily ground states, in the
super-linear case). By using Picone's inequality and blow-up arguments we
prove our main result: the convergence $u_{q}\rightarrow\theta_{p}e_{p}$ in
$C^{1}(\overline{\Omega}),$ as $q\rightarrow p,$ where%
\[
\theta_{p}:=\exp\left(  \left\Vert e_{p}\right\Vert _{p}^{-p}\int_{\Omega
}e_{p}^{p}\left\vert \ln e_{p}\right\vert dx\right)  .
\]

As a consequence, we obtain the differentiability at $q=p$ of the function
$q\in\lbrack1,p^{\ast})\mapsto\lambda_{q}\in\mathbb{R}$, where $\lambda_{q}$
denotes the minimum on $W_{0}^{1,p}(\Omega)\backslash\{0\}$ of the Rayleigh
quotient $\mathcal{R}_{q}$ defined by $\mathcal{R}_{q}(u):=\left\Vert \nabla
u\right\Vert _{p}^{p}/\left\Vert u\right\Vert _{q}^{p}.$ (From now on
$\left\Vert v\right\Vert _{r}$ stands for the usual $L^{r}$ norm of $v.$)
Precisely, we prove that
\[
\frac{d}{dq}\left[  \lambda_{q}\right]  _{q=p}=\lambda_{p}\ln(\theta
_{p}\left\Vert e_{p}\right\Vert _{p}).
\]
For this we use the fact that the function $v_{q}:=\left(  \frac{\lambda_{q}%
}{\lambda_{p}}\right)  ^{\frac{1}{q-p}}w_{q}$ is a positive weak solution of
the resonant Lane-Emden problem (\ref{lambdap}) for each $q\in\lbrack
1,p)\cup(p,p^{\ast})$, where $w_{q}$ denotes a positive extrema of the
Rayleigh quotient $\mathcal{R}_{q}.$ In the super-linear case $p<q<p^{\ast}$
the function $v_{q}$ is a ground state and in the sub-linear case $1<\,q<p$
this function is, of course, the only positive solution of (\ref{lambdap}).

We emphasize that our results determine the exact asymptotic behavior of
positive solutions, as $q\rightarrow p,$ of the Lane-Emden problem
(\ref{Lane-Emden}) for any $\lambda>0.$ In fact, any family of positive
solutions $\left\{  u_{\lambda,q}\right\}  _{q\in\lbrack1,p)\cup(p,p^{\ast})}$
of this problem in the non-resonant case $0<\lambda\neq\lambda_{p}$ is
obtained, by scaling, from a family of positive solutions of the resonant
case. Thus, as we will see, our main result implies that
\begin{equation}
\lim\limits_{q\rightarrow p^{-}}\left\Vert u_{\lambda,q}\right\Vert _{C^{1}%
}=\left\{
\begin{array}
[c]{ccc}%
0, & \text{if} & \lambda<\lambda_{p}\\
\infty, & \text{if} & \lambda>\lambda_{p}%
\end{array}
\right.  \qquad\text{and}\qquad\lim\limits_{q\rightarrow p^{+}}\left\Vert
u_{\lambda,q}\right\Vert _{C^{1}}=\left\{
\begin{array}
[c]{ccc}%
\infty, & \text{if} & \lambda<\lambda_{p}\\
0, & \text{if} & \lambda>\lambda_{p}%
\end{array}
\right.  \label{asympnon}%
\end{equation}
(Here $\left\Vert v\right\Vert _{C^{1}}:=\left\Vert v\right\Vert _{\infty
}+\left\Vert \nabla v\right\Vert _{\infty}$ is the norm of a function $v$ in
$C^{1}(\overline{\Omega}).$)

A third consequence of our main result is that, for each $\lambda>0,$ $1\leq
s\leq\infty$ and for any sequence $q_{n}\rightarrow p$ one has:
\[
\lim_{q_{n}\rightarrow p}\left(  \lambda\left\Vert u_{\lambda,q_{n}%
}\right\Vert _{s}^{q_{n}-p}\right)  =\lambda_{p}\qquad\text{and}\qquad
\frac{u_{\lambda,q_{n}}}{\left\Vert u_{\lambda,q_{n}}\right\Vert _{s}%
}\rightarrow\frac{e_{p}}{\left\Vert e_{p}\right\Vert _{s}}%
\]
the last convergence being in the $C^{1}(\overline{\Omega})$ space.

This result might be useful for numerical computation of the first eigenvalue
of the $p$-Laplacian (see \cite{BBEM}) taking into account the following
aspects: a) $\lambda$ does not need to be close to $\lambda_{p};$ b) the
sequence $q_{n}$ tending to $p$ can be arbitrarily chosen ; c) the
normalization in the computational processes can be made by using any $L^{s}%
$-norm with $s\geq1.$

Finally, by using level set techniques we prove an explicit estimate involving
$L^{\infty}$ and $L^{1}$ norms of the solutions $u_{\lambda,q}$ of
(\ref{Lane-Emden}), which is valid if $q\in\lbrack1,p\frac{N+1}{N}).$ This
estimate, which has independent interest, might be useful in a computational
approach of the Lane-Emden problem or even in the analysis of nodal solutions
for this problem.

This paper is organized as follows. In Section 2 we prove our main result on
the asymptotic behavior for the resonant case, as $q\rightarrow p$. Section 3
is dedicated to the consequences of our main result. Finally, in Section 4, we
obtain, for each $\lambda>0,$ an estimate involving $L^{\infty}$ and $L^{1}$
norms of the solution $u_{\lambda,q}.$

\section{Asymptotic behavior of the resonant problem\label{section ground}}

In this section we consider the resonant Lane-Emden problem%
\begin{equation}
\left\{
\begin{array}
[c]{rcll}%
-\Delta_{p}u & = & \lambda_{p}\left\vert u\right\vert ^{q-2}u & \text{in
}\Omega\\
u & = & 0 & \text{on }\partial\Omega.
\end{array}
\right.  \label{lambdap}%
\end{equation}
Our goal is to completely determine the asymptotic behavior of the weak
positive solutions of this problem, as $q\rightarrow p.$

The weak solutions of (\ref{lambdap}) are the critical points of the energy
functional $I_{q}:W_{0}^{1,p}\left(  \Omega\right)  \longrightarrow\mathbb{R}$
defined by
\[
I_{q}(u):=\frac{1}{p}\int_{\Omega}\left\vert \nabla u\right\vert ^{p}%
dx-\frac{\lambda_{p}}{q}\int_{\Omega}\left\vert u\right\vert ^{q}dx.
\]
Furthermore, a family $\left\{  v_{q}\right\}  _{q\in\lbrack1,p)\cup
(p,p^{\ast})}$ of positive weak solutions for (\ref{lambdap}) can be obtained
from minimizers of the Rayleigh quotient%
\[
\mathcal{R}_{q}(u):=\frac{\int_{\Omega}\left\vert \nabla u\right\vert ^{p}%
dx}{\left(  \int_{\Omega}\left\vert u\right\vert ^{q}dx\right)  ^{\frac{p}{q}%
}}%
\]
in $W_{0}^{1,p}\left(  \Omega\right)  \setminus\{0\}.$

In fact, as it is well-known, the compactness of the immersion $W_{0}%
^{1,p}(\Omega)\hookrightarrow L^{q}(\Omega)$ for $1\leq q<p^{\ast}$ implies
that $\mathcal{R}_{q}:W_{0}^{1,p}\left(  \Omega\right)  \setminus
\{0\}\longrightarrow\mathbb{R}$ attains a positive minimum at a positive and
$L^{q}$-normalized function $w_{q}\in W_{0}^{1,p}\left(  \Omega\right)  \cap
C^{1,\alpha}\left(  \overline{\Omega}\right)  :$
\begin{equation}
\left\Vert w_{q}\right\Vert _{q}=1\text{ \ and \ }\lambda_{q}:=\min\left\{
\mathcal{R}_{q}(u):u\in W_{0}^{1,p}\left(  \Omega\right)  \setminus
\{0\}\right\}  =\mathcal{R}_{q}(w_{q}). \label{cpinf}%
\end{equation}

It is straightforward to verify that $w_{q}$ is a weak solution of
\[
\left\{
\begin{array}
[c]{rcll}%
-\Delta_{p}u & = & \lambda_{q}\left\vert u\right\vert ^{q-2}u & \text{in
}\Omega\\
u & = & 0 & \text{on }\partial\Omega
\end{array}
\right.
\]
and hence that
\begin{equation}
v_{q}=\left(  \frac{\lambda_{q}}{\lambda_{p}}\right)  ^{\frac{1}{q-p}}w_{q}
\label{uq}%
\end{equation}
is a positive weak solution of (\ref{lambdap}) for each $q\in\lbrack
1,p)\cup(p,p^{\ast}).$

Since $\left\Vert w_{q}\right\Vert _{q}=1$ one has
\begin{equation}
\left\Vert v_{q}\right\Vert _{q}=\left(  \frac{\lambda_{q}}{\lambda_{p}%
}\right)  ^{\frac{1}{q-p}}. \label{lquq}%
\end{equation}

In the sub-linear case $1\leq q<p$ the function $v_{q}$ is the only critical
point of $I_{q}.$ Moreover, this function minimizes the energy functional
$I_{q}$ on $W_{0}^{1,p}\left(  \Omega\right)  \setminus\{0\},$ that is%
\begin{equation}
I_{q}(v_{q})=\min\left\{  I_{q}(v):v\in W_{0}^{1,p}\left(  \Omega\right)
\setminus\{0\}\right\}  . \label{minsub}%
\end{equation}
This property can also be directly proved using (\ref{cpinf}) and
(\ref{lquq}). In fact, it is straightforward to verify that if $v\in
W_{0}^{1,p}\left(  \Omega\right)  \setminus\{0\}$ then%
\[
I_{q}(v)\geq\min_{t\in\mathbb{R}}I_{q}(tv)=I_{q}(t_{v}v)\geq I_{q}(v_{q})
\]
where $t_{v}=\left(  \lambda_{p}\int_{\Omega}\left\vert v\right\vert
^{q}dx\right)  ^{\frac{1}{p-q}}\left(  \int_{\Omega}\left\vert \nabla
v\right\vert ^{p}dx\right)  ^{-\frac{1}{p-q}}.$

In the super-linear case $1<p<q<p^{\ast}$ the energy functional is not bounded
from below. Indeed, for any $v\in W_{0}^{1,p}\left(  \Omega\right)
\setminus\{0\}$ one can verify that%
\[
\lim_{t\rightarrow\infty}I_{q}(tv)=-\infty.
\]
However, the weak positive solution $v_{q}$ minimizes the energy functional
$I_{q}$ in the Nehari manifold%
\[
\mathcal{N}_{q}:=\left\{  v\in W_{0}^{1,p}\left(  \Omega\right)
\setminus\{0\}:\int_{\Omega}\left\vert \nabla v\right\vert ^{p}dx=\lambda
_{p}\int_{\Omega}\left\vert v\right\vert ^{q}dx\right\}  .
\]
Therefore, since all nontrivial solutions of (\ref{lambdap}) belong to
$\mathcal{N}_{q}$ (take $\lambda=\lambda_{p}$ and $\phi=u$ in (\ref{qweak})),
it follows that $v_{q}\in\mathcal{N}_{q}$ and also that $v_{q}$ is a ground state.

The verification that $v_{q}$ minimizes the energy functional in the Nehari
manifold $\mathcal{N}_{q}$ is simple: if $v\in\mathcal{N}_{q}$ then
\[
\lambda_{q}\leq\mathcal{R}_{q}(v)=\frac{\int_{\Omega}\left\vert \nabla
v\right\vert ^{p}dx}{\left(  \int_{\Omega}\left\vert v\right\vert
^{q}dx\right)  ^{\frac{p}{q}}}=\frac{\lambda_{p}\int_{\Omega}\left\vert
v\right\vert ^{q}dx}{\left(  \int_{\Omega}\left\vert v\right\vert
^{q}dx\right)  ^{\frac{p}{q}}}=\lambda_{p}\left\Vert v\right\Vert _{q}^{q-p},
\]
implying that
\begin{equation}
\left\Vert v_{q}\right\Vert _{q}=\left(  \frac{\lambda_{q}}{\lambda_{p}%
}\right)  ^{\frac{1}{q-p}}\leq\left\Vert v\right\Vert _{q}. \label{minlq}%
\end{equation}
Therefore,
\begin{equation}
I_{q}(v_{q})\leq I_{q}(v) \label{miner}%
\end{equation}
since
\begin{align*}
I_{q}(v_{q})  &  =\frac{1}{p}\int_{\Omega}\left\vert \nabla v_{q}\right\vert
^{p}dx-\frac{\lambda_{p}}{q}\int_{\Omega}\left\vert v_{q}\right\vert ^{q}dx\\
&  =\lambda_{p}\left(  \frac{1}{p}-\frac{1}{q}\right)  \left\Vert
v_{q}\right\Vert _{q}^{q}\\
&  \leq\lambda_{p}\left(  \frac{1}{p}-\frac{1}{q}\right)  \left\Vert
v\right\Vert _{q}^{q}=\frac{1}{p}\int_{\Omega}\left\vert \nabla v\right\vert
^{p}dx-\frac{\lambda_{p}}{q}\int_{\Omega}\left\vert v\right\vert ^{q}%
dx=I_{q}\left(  v\right)  .
\end{align*}

\begin{remark}
Since no general uniqueness result is known for the super-linear case, the
existence of multiple ground states for (\ref{lambdap}) is possible, at least
in principle, for each fixed $q\in(p,p^{\ast}).$ However, all of them must
have the same energy and also the same $L^{q}$ norm. Moreover, if $u_{q}$ is
an arbitrary nontrivial weak solution of (\ref{lambdap}), then $\left\Vert
u_{q}\right\Vert _{q}\geq\left\Vert v_{q}\right\Vert _{q},$ according to
(\ref{minlq}).
\end{remark}

In the remaining of this section we denote by $v_{q}$ the function defined by
(\ref{uq}) and by $u_{q}$ any positive solution of the resonant Lane-Emden
(\ref{Resonant}). Obviously, in the sub-linear case we must have $u_{q}%
=v_{q}.$

\begin{lemma}
\label{A}Let $u_{q}\in W_{0}^{1,p}\left(  \Omega\right)  $ be a positive weak
solution of the resonant Lane-Emden (\ref{Resonant}) with $q\in\lbrack
1,p)\cup(p,p^{\ast}).$ Then,%
\[
\left\Vert u_{q}\right\Vert _{\infty}\geq A:=\left\{
\begin{array}
[c]{lll}%
\left\vert \Omega\right\vert ^{-1}\int_{\Omega}\left\vert e_{p}\right\vert
^{p}dx & \text{ if } & 1<q<p\\
1 & \text{ if } & 1<p<q<p^{\ast}.
\end{array}
\right.
\]

\end{lemma}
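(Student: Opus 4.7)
My plan is to split the argument along the two ranges $p<q<p^{\ast}$ and $1<q<p$, using the variational characterization $(\ref{lambdamin})$ in the super-linear range and combining Picone's inequality with Jensen's inequality in the sub-linear range.

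For the super-linear case, I test the weak formulation $(\ref{qweak})$ of $(\ref{Resonant})$ with $\varphi=u_{q}$ to obtain $\int_{\Omega}\left\vert \nabla u_{q}\right\vert ^{p}dx=\lambda_{p}\int_{\Omega}u_{q}^{q}dx$, while $(\ref{lambdamin})$ gives $\lambda_{p}\int_{\Omega}u_{q}^{p}dx\leq\int_{\Omega}\left\vert \nabla u_{q}\right\vert ^{p}dx$. Combining these and using the pointwise bound $u_{q}^{q}=u_{q}^{p}\cdot u_{q}^{q-p}\leq\left\Vert u_{q}\right\Vert _{\infty}^{q-p}u_{q}^{p}$ (valid since $q-p>0$) yields $\int_{\Omega}u_{q}^{p}dx\leq\left\Vert u_{q}\right\Vert _{\infty}^{q-p}\int_{\Omega}u_{q}^{p}dx$; dividing by the positive quantity $\int_{\Omega}u_{q}^{p}dx$ gives $\left\Vert u_{q}\right\Vert _{\infty}\geq 1=A$.

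For the sub-linear case, I apply Picone's inequality with the pair $(u,v)=(e_{p},u_{q})$, namely $\int_{\Omega}\left\vert \nabla e_{p}\right\vert ^{p}dx\geq\int_{\Omega}\left\vert \nabla u_{q}\right\vert ^{p-2}\nabla u_{q}\cdot\nabla(e_{p}^{p}/u_{q}^{p-1})dx$. Using $\varphi=e_{p}^{p}/u_{q}^{p-1}$ as a test function in $(\ref{qweak})$ evaluates the right-hand side to $\lambda_{p}\int_{\Omega}u_{q}^{q-p}e_{p}^{p}dx$, so together with $\int_{\Omega}\left\vert \nabla e_{p}\right\vert ^{p}dx=\lambda_{p}\int_{\Omega}e_{p}^{p}dx$ this gives
\[
\int_{\Omega}u_{q}^{q-p}e_{p}^{p}dx\leq\int_{\Omega}e_{p}^{p}dx.
\]
Since $q-p<0$, the function $t\mapsto t^{q-p}$ is convex on $(0,\infty)$, so Jensen's inequality applied to the probability measure $d\mu:=e_{p}^{p}dx/\int_{\Omega}e_{p}^{p}dx$ yields $\left(\int_{\Omega}u_{q}\,d\mu\right)^{q-p}\leq\int_{\Omega}u_{q}^{q-p}d\mu\leq 1$; because the exponent is negative this forces $\int_{\Omega}u_{q}e_{p}^{p}dx\geq\int_{\Omega}e_{p}^{p}dx$. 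Using $\left\Vert e_{p}\right\Vert _{\infty}=1$, and hence $e_{p}^{p}\leq 1$ pointwise, one obtains $\int_{\Omega}u_{q}dx\geq\int_{\Omega}u_{q}e_{p}^{p}dx\geq\int_{\Omega}e_{p}^{p}dx$, and the trivial estimate $\int_{\Omega}u_{q}dx\leq\left\Vert u_{q}\right\Vert _{\infty}\left\vert \Omega\right\vert $ closes the argument.

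The main technical subtlety I expect is justifying $\varphi=e_{p}^{p}/u_{q}^{p-1}$ as an admissible element of $W_{0}^{1,p}(\Omega)$, given that $u_{q}$ vanishes on $\partial\Omega$. I plan to invoke the Hopf boundary point lemma, which ensures that $e_{p}/u_{q}$ is bounded on $\overline{\Omega}$ because both functions belong to $C^{1}(\overline{\Omega})$ with non-zero outward normal derivatives on $\partial\Omega$; a standard regularization such as $\varphi_{\varepsilon}:=e_{p}^{p}/(u_{q}+\varepsilon)^{p-1}$ together with passage to the limit $\varepsilon\to 0^{+}$ should make all integrations by parts and the application of Picone rigorous.
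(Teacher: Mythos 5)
Your proof is correct. In the super-linear range $p<q<p^{\ast}$ your argument coincides with the paper's: testing (\ref{qweak}) with $\varphi=u_{q}$, using the variational characterization (\ref{lambdamin}) and the pointwise bound $u_{q}^{q}\leq\left\Vert u_{q}\right\Vert _{\infty}^{q-p}u_{q}^{p}$. In the sub-linear range you take a genuinely different route. The paper first invokes uniqueness of the positive solution (so that $u_{q}=v_{q}$) together with the global energy-minimality (\ref{minsub}), compares $-I_{q}(u_{q})\geq-I_{q}(e_{p})$ and uses $0<e_{p}\leq1$ to get $\int_{\Omega}u_{q}^{q}dx\geq\int_{\Omega}e_{p}^{p}dx$, from which the constant $\left\vert \Omega\right\vert ^{-1}\int_{\Omega}e_{p}^{p}dx$ follows after normalizing by $\left\vert \Omega\right\vert$. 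You instead test the equation with $e_{p}^{p}/u_{q}^{p-1}$, apply Picone's inequality (\ref{Picones}) to obtain $\int_{\Omega}u_{q}^{q-p}e_{p}^{p}dx\leq\int_{\Omega}e_{p}^{p}dx$, and then use Jensen's inequality for the probability measure $e_{p}^{p}dx/\int_{\Omega}e_{p}^{p}dx$ and the convexity of $t\mapsto t^{q-p}$ (the exponent being negative) to reverse the inequality into $\int_{\Omega}u_{q}e_{p}^{p}dx\geq\int_{\Omega}e_{p}^{p}dx$, which together with $e_{p}\leq1$ and $\int_{\Omega}u_{q}dx\leq\left\Vert u_{q}\right\Vert _{\infty}\left\vert \Omega\right\vert$ gives the same constant. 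The trade-off: the paper's argument is shorter but leans on the uniqueness result of \cite{DiazSaa} and on (\ref{minsub}), whereas yours applies verbatim to an arbitrary positive weak solution without any uniqueness or minimality input, and it is in the same Picone/Hopf spirit that the paper uses later in Lemmas \ref{infsup1} and \ref{infsup2}. Your concern about the admissibility of $\varphi=e_{p}^{p}/u_{q}^{p-1}$ is legitimate and is resolved exactly as in Remark \ref{lopital}; alternatively, your own regularization $\varphi_{\varepsilon}=e_{p}^{p}/(u_{q}+\varepsilon)^{p-1}$ combined with monotone convergence as $\varepsilon\rightarrow0^{+}$ makes the step rigorous without invoking Hopf's lemma at all, since Picone's inequality may be applied to the pair $(e_{p},u_{q}+\varepsilon)$ and $\nabla(u_{q}+\varepsilon)=\nabla u_{q}$.
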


\begin{proof}
If $1\leq q<p$ uniqueness implies that $u_{q}=v_{q}.$ Hence, (\ref{minsub})
and the fact that $0<e_{p}\leq1$ in $\Omega$ yield%
\begin{align*}
\lambda_{p}\left(  \frac{1}{q}-\frac{1}{p}\right)  \int_{\Omega}\left\vert
u_{q}\right\vert ^{q}dx  &  =-I_{q}(u_{q})\\
&  \geq-I_{q}(e_{p})\\
&  =\lambda_{p}\int_{\Omega}\left(  \frac{\left\vert e_{p}\right\vert ^{q}}%
{q}-\frac{\left\vert e_{p}\right\vert ^{p}}{p}\right)  dx\geq\lambda
_{p}\left(  \frac{1}{q}-\frac{1}{p}\right)  \int_{\Omega}\left\vert
e_{p}\right\vert ^{p}dx.
\end{align*}
Therefore, since $\left\vert \Omega\right\vert ^{-1}\int_{\Omega}\left\vert
e_{p}\right\vert ^{p}dx\leq1,$ we have
\[
\left\vert \Omega\right\vert ^{-1}\int_{\Omega}\left\vert e_{p}\right\vert
^{p}dx\leq\left(  \left\vert \Omega\right\vert ^{-1}\int_{\Omega}\left\vert
e_{p}\right\vert ^{p}dx\right)  ^{\frac{1}{q}}\leq\left(  \left\vert
\Omega\right\vert ^{-1}\int_{\Omega}\left\vert u_{q}\right\vert ^{q}dx\right)
^{\frac{1}{q}}\leq\left\Vert u_{q}\right\Vert _{\infty}.
\]

If $1<p<q<p^{\ast}$ then $\left\Vert u_{q}\right\Vert _{\infty}\geq1$ because%
\[
\int_{\Omega}\left\vert u_{q}\right\vert ^{p}dx\leq\frac{1}{\lambda_{p}}%
\int_{\Omega}\left\vert \nabla u_{q}\right\vert ^{p}dx=\int_{\Omega}\left\vert
u_{q}\right\vert ^{q}dx\leq\left\Vert u_{q}\right\Vert _{\infty}^{q-p}%
\int_{\Omega}\left\vert u_{q}\right\vert ^{p}dx.
\]

\end{proof}

In the next lemma $\phi_{p}\in W_{0}^{1,p}(\Omega)$ denotes the $p$-torsion
function of $\Omega$, that is, the solution of
\begin{equation}
\left\{
\begin{array}
[c]{rcll}%
-\Delta_{p}u & = & 1 & \text{in }\Omega,\\
u & = & 0 & \text{on }\partial\Omega.
\end{array}
\right.  \label{torsion}%
\end{equation}
(Classical results imply that $\phi_{p}>0$ in $\Omega$ and that $\phi_{p}\in
C^{1,\beta}(\overline{\Omega})$ for some $0<\beta<1.$)

\begin{lemma}
\label{uppersub}For each $1\leq q<p,$ let $u_{q}\in W_{0}^{1,p}\left(
\Omega\right)  $ be the positive weak solution of the (sub-linear) Lane-Emden
(\ref{lambdap}). Then,
\begin{equation}
\left\Vert u_{q}\right\Vert _{\infty}^{p-q}\leq\lambda_{p}\left\Vert \phi
_{p}\right\Vert _{\infty}^{p-1}. \label{subupper}%
\end{equation}

\end{lemma}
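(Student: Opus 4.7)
The plan is to use a straightforward weak comparison argument, exploiting the $(p-1)$-homogeneity of the $p$-Laplacian operator together with the definition of the torsion function $\phi_{p}$.

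Set $M:=\|u_{q}\|_{\infty}$. Since $0<q<p$ and $u_{q}$ is a positive weak solution of \eqref{lambdap}, we have the pointwise bound
\[
-\Delta_{p}u_{q}=\lambda_{p}u_{q}^{q-1}\leq\lambda_{p}M^{q-1}\quad\text{in }\Omega.
\]
The idea is to compare $u_{q}$ with a suitable scalar multiple $w:=c\,\phi_{p}$ of the torsion function. By homogeneity, $-\Delta_{p}w=c^{p-1}$, so choosing $c:=(\lambda_{p}M^{q-1})^{1/(p-1)}$ makes $w$ a solution of $-\Delta_{p}w=\lambda_{p}M^{q-1}$ in $\Omega$ with $w=0$ on $\partial\Omega$.

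With this choice, $-\Delta_{p}w\geq-\Delta_{p}u_{q}$ weakly in $\Omega$ and $w\geq u_{q}$ on $\partial\Omega$ (both vanish). The weak comparison principle for the $p$-Laplacian (which applies because both sides have the same boundary values and the operator is monotone) then gives $u_{q}\leq w$ throughout $\Omega$. Taking the supremum yields
\[
M\leq c\,\|\phi_{p}\|_{\infty}=(\lambda_{p}M^{q-1})^{\frac{1}{p-1}}\|\phi_{p}\|_{\infty},
\]
and raising both sides to the power $p-1$ and rearranging produces the claimed inequality $M^{p-q}\leq\lambda_{p}\|\phi_{p}\|_{\infty}^{p-1}$.

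There is no serious obstacle here: the only step requiring any care is the invocation of the weak comparison principle, but since $u_{q},w\in W_{0}^{1,p}(\Omega)\cap C^{1,\alpha}(\overline{\Omega})$ are both nonnegative with the same zero boundary data and $-\Delta_{p}w\geq-\Delta_{p}u_{q}$ weakly, this is a standard fact (for instance, test the weak formulations with $(u_{q}-w)^{+}\in W_{0}^{1,p}(\Omega)$ and use the strict monotonicity of $\xi\mapsto|\xi|^{p-2}\xi$ to conclude $(u_{q}-w)^{+}\equiv 0$). The rest is algebra.
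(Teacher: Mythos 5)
Your argument is correct and is essentially identical to the paper's proof: both compare $u_{q}$ with the scaled torsion function $\bigl(\lambda_{p}\|u_{q}\|_{\infty}^{q-1}\bigr)^{1/(p-1)}\phi_{p}$ via the weak comparison principle and then pass to the maximum values. The added remark on how the comparison principle is verified (testing with $(u_{q}-w)^{+}$) is a fine, standard justification of the step the paper simply cites.
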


\begin{proof}
Since
\[
\left\{
\begin{array}
[c]{ll}%
-\Delta_{p}u_{q}=\lambda_{p}u_{q}^{q-1}\leq\lambda_{p}\left\Vert
u_{q}\right\Vert _{\infty}^{q-1}=-\Delta_{p}\left(  \left(  \lambda
_{p}\left\Vert u_{q}\right\Vert _{\infty}^{q-1}\right)  ^{\frac{1}{p-1}}%
\phi_{p}\right)  & \text{in }\Omega,\\
u_{q}=0=\left(  \lambda_{p}\left\Vert u_{q}\right\Vert _{\infty}^{q-1}\right)
^{\frac{1}{p-1}}\phi_{p} & \text{on }\partial\Omega
\end{array}
\right.
\]
it follows from the comparison principle that $u_{q}\leq\left(  \lambda
_{p}\left\Vert u_{q}\right\Vert _{\infty}^{q-1}\right)  ^{\frac{1}{p-1}}%
\phi_{p}$ in $\Omega.$ Hence, we obtain (\ref{subupper}) after passing to the
maximum values.
\end{proof}

\begin{remark}
It follows from Lemmas \ref{A} and \ref{uppersub} that
\[
\frac{1}{\lambda_{p}\left\Vert \phi_{p}\right\Vert _{\infty}^{p-1}}\leq
\liminf_{q\rightarrow p^{-}}\left\Vert u_{q}\right\Vert _{\infty}^{q-p}%
\leq\limsup_{q\rightarrow p^{-}}\left\Vert u_{q}\right\Vert _{\infty}%
^{q-p}\leq1.
\]
which leads to following well-known lower bound to the first eigenvalue
$\lambda_{p}$ in terms of the $p$-torsion function of $\Omega:$%
\begin{equation}
\frac{1}{\left\Vert \phi_{p}\right\Vert _{\infty}^{p-1}}\leq\lambda_{p}.
\label{lblp}%
\end{equation}

\end{remark}

In the sequel we prove an \textit{a priori} $L^{\infty}$ boundedness result
for an arbitrary family $\left\{  u_{q}\right\}  _{p<q<p^{\ast}}$ of positive
weak solutions of the super-linear Lane-Endem problem (\ref{lambdap}), if $q$
is sufficiently close to $p^{+}.$ Our proof was motivated by Lemma 2.1 of
\cite{Lorca}, where a Liouville-type theorem was proved for positive weak
solutions of the inequality $-\Delta_{p}w\geq cw^{p-1}$ in $\mathbb{R}^{N}$ or
in a half-space. It combines a blow-up argument with the following Picone's
inequality (see \cite{AH}), which is valid for all differentiable $u\geq0$ and
$v>0:$
\begin{equation}
\left\vert \nabla u\right\vert ^{p}\geq\left\vert \nabla v\right\vert
^{p-2}\nabla v\cdot\nabla\left(  \frac{u^{p}}{v^{p-1}}\right)  .
\label{Picones}%
\end{equation}

\begin{lemma}
\label{uppersup}Let $\left\{  u_{q}\right\}  _{p<q<p^{\ast}}$ be a family of
positive weak solutions of the (super-linear) Lane-Endem problem
(\ref{lambdap}). Then,
\begin{equation}
\limsup_{q\rightarrow p^{+}}\left\Vert u_{q}\right\Vert _{\infty}^{q-p}%
<\infty. \label{blowbond}%
\end{equation}

\end{lemma}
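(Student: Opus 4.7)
The plan is to argue by contradiction using a rescaling (blow-up) argument, reducing the claim to a nonexistence (Liouville-type) statement for the limiting equation which is then ruled out by Picone's inequality.

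Suppose, for contradiction, that there is a sequence $q_n\to p^{+}$ such that $M_n:=\|u_{q_n}\|_{\infty}\to\infty$ with $M_n^{q_n-p}\to\infty$. Choose $x_n\in\Omega$ with $u_{q_n}(x_n)=M_n$, and set the scaling parameter $\mu_n:=M_n^{-(q_n-p)/p}$, so that $\mu_n\to 0$. Define the blow-up sequence
\[
v_n(y):=\frac{1}{M_n}u_{q_n}(x_n+\mu_n y),\qquad y\in\Omega_n:=\mu_n^{-1}(\Omega-x_n).
\]
A direct computation (based on the scaling properties of $\Delta_p$) shows that $v_n$ is a positive weak solution of
\[
-\Delta_p v_n=\lambda_p\mu_n^{p}M_n^{q_n-p}v_n^{q_n-1}=\lambda_p\,v_n^{q_n-1}\quad\text{in }\Omega_n,
\]
with $0<v_n\leq v_n(0)=1$. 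Since $\mu_n\to 0$ and $x_n\in\Omega$, a subsequence of $\Omega_n$ exhausts either the whole space $\mathbb{R}^N$ (if $\mathrm{dist}(x_n,\partial\Omega)/\mu_n\to\infty$) or, after straightening the boundary, a half-space $\mathbb{H}$ (if this ratio stays bounded).

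Next I would invoke the standard $C^{1,\alpha}$-regularity estimates of DiBenedetto/Lieberman/Tolksdorf for the $p$-Laplacian, which are uniform on compact subsets since the right-hand side $\lambda_p v_n^{q_n-1}$ is uniformly bounded. A diagonal extraction then yields a limit $v\in C^{1}_{\mathrm{loc}}(\overline{\Omega_\infty})$ ($\Omega_\infty\in\{\mathbb{R}^N,\mathbb{H}\}$) with $0\leq v\leq 1$, $v(0)=1$, vanishing on $\partial\Omega_\infty$ in the half-space case, and satisfying (weakly) the limit equation
\[
-\Delta_p v=\lambda_p\, v^{p-1}\qquad\text{in }\Omega_\infty.
\]
The strong maximum principle gives $v>0$ in the interior. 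The main obstacle, both conceptually and technically, is ruling out the existence of this bounded positive entire/half-space solution; this is where Picone's inequality (\ref{Picones}) comes in.

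Finally, for a large ball $B_R\subset\Omega_\infty$, fix a nonnegative $\varphi\in C_c^{\infty}(B_R)$. Applying Picone's inequality (\ref{Picones}) with $u=\varphi$ and this positive $v$, integrating by parts, and using the limit PDE gives
\[
\int_{B_R}|\nabla\varphi|^{p}\,dx\;\geq\;\int_{B_R}|\nabla v|^{p-2}\nabla v\cdot\nabla\!\left(\frac{\varphi^{p}}{v^{p-1}}\right)dx\;=\;\lambda_p\int_{B_R}\varphi^{p}\,dx.
\]
Taking the infimum over admissible $\varphi$ would force $\lambda_p\leq \lambda_p(B_R)$, the first Dirichlet $p$-eigenvalue of $B_R$. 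But by the scaling law $\lambda_p(B_R)=R^{-p}\lambda_p(B_1)\to 0$ as $R\to\infty$, choosing $R$ large enough yields the contradiction $\lambda_p\leq \lambda_p(B_R)<\lambda_p$. Hence $\limsup_{q\to p^{+}}\|u_q\|_\infty^{q-p}<\infty$, establishing (\ref{blowbond}).
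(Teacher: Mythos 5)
Your proposal is correct and follows essentially the same route as the paper: a blow-up at maximum points argued by contradiction, uniform $C^{1,\beta}$ estimates and passage to a limit equation, then Picone's inequality to produce an eigenvalue comparison that yields the contradiction. The differences are only in execution: the paper absorbs $\lambda_{p}$ into the scaling so the limit equation is $-\Delta_{p}w=w^{p-1}$, fixes in advance a ball with $\lambda_{R}<1$ and tests with its first eigenfunction (which forces the Hopf-lemma remark about $e_{R}^{p}/w^{p-1}$), whereas you keep $\lambda_{p}$, test with compactly supported nonnegative $\varphi$ (avoiding that boundary issue), and conclude via the scaling law $\lambda_{p}(B_{R})=R^{-p}\lambda_{p}(B_{1})\rightarrow0$.
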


\begin{proof}
Let us suppose, by contradiction, that $\left\Vert u_{q_{n}}\right\Vert
_{\infty}^{q_{n}-p}\rightarrow\infty$ for some sequence $q_{n}\rightarrow
p^{+}.$ Let $x_{n}$ denote a maximum point of $u_{q_{n}},$ so that $u_{q_{n}%
}(x_{n})=\left\Vert u_{q_{n}}\right\Vert _{\infty}.$ Define
\[
\mu_{n}:=\left(  \lambda_{p}\left\Vert u_{q_{n}}\right\Vert _{\infty}%
^{q_{n}-p}\right)  ^{-\frac{1}{p}},\text{ \ \ }\Omega_{n}:=\left\{
x\in\mathbb{R}^{N}:\mu_{n}x+x_{n}\in\Omega\right\}
\]
and
\[
w_{n}(x):=\left\Vert u_{q_{n}}\right\Vert _{\infty}^{-1}u_{q_{n}}(\mu
_{n}x+x_{n});\text{ \ }x\in\Omega_{n}.
\]

Note that $B_{d_{n}/\mu_{n}}\subset\Omega_{n}$ where we are denoting by
$d_{n}\ $the distance from $x_{n}$ to the boundary $\partial\Omega$ and by
$B_{d_{n}/\mu_{n}}$ the ball centered at $x=0$ with radius $d_{n}/\mu_{n}.$

It follows that $\mu_{n}\rightarrow0^{+},$ $0<w_{n}\leq1=\left\Vert
w_{n}\right\Vert _{\infty}=w_{n}(0)$ in $\Omega_{n}$ and
\begin{equation}
\left\{
\begin{array}
[c]{rcll}%
-\Delta_{p}w_{n} & = & w_{n}^{q_{n}-1} & \text{in }\Omega_{n},\\
w_{n} & = & 0 & \text{on }\partial\Omega_{n}.
\end{array}
\right.  \label{wn}%
\end{equation}

By passing to a subsequence we can also suppose that $x_{n}\rightarrow
x_{0}\in\overline{\Omega}$ and that $\left\Vert u_{q_{n}}\right\Vert _{\infty
}^{q_{n}-p}$ is increasing. It is well-known that $\Omega_{n}$ tends either to
$\mathbb{R}^{N}$ or to a half-space if $x_{0}\in\Omega$ or $x_{0}\in
\partial\Omega,$ respectively.

Let $B_{R}$ be a ball with radius $R$ sufficiently large satisfying
$0\in\overline{B_{R}}$ and
\begin{equation}
\lambda_{R}<1, \label{lambdaR<1}%
\end{equation}
where $\lambda_{R}$ denotes the first eigenvalue of the $p$-Laplacian with
homogeneous Dirichlet conditions in $B_{R}.$

Now, let $n_{0}$ be such that
\[
\overline{B_{R}}\subset\Omega_{n}\text{ \ for all }n\geq n_{0}.
\]

Since $0\leq w_{n}\leq1$ in $\overline{B_{R}},$ global H\"{o}lder regularity
implies that there exist constants $K>0$ and $\beta\in(0,1),$ both depending
at most on $R$ (but independent of $n\geq n_{0}$), such that $\left\Vert
w_{n}\right\Vert _{C^{1,\beta}(\overline{B_{R}})}\leq K$ (see \cite[Theorem
1]{Lieberman}). Hence, compactness of the immersion $C^{1,\beta}%
(\overline{B_{R}})\hookrightarrow C^{1}(\overline{B_{R}})$ implies that, up to
a subsequence, $w_{n}\rightarrow w$ in $C^{1}(\overline{B_{R}}).$ Note that
$w\geq0$ in $\overline{B_{R}}$ and $w(0)=1$ (since $w_{n}(0)=1$).

Moreover, we have
\[
-\Delta_{p}w=w^{p-1}\text{ \ in }B_{R}%
\]
in the weak sense. In fact, if $\phi\in C_{0}^{\infty}(B_{R})\subset
C_{0}^{\infty}(\Omega_{n})$ is an arbitrary test function of $B_{R}$ then
(\ref{wn}) yields
\[
\int_{B_{R}}\left\vert w_{n}\right\vert ^{p-2}\nabla w_{n}\cdot\nabla\phi
dx=\int_{B_{R}}w_{n}^{q_{n}-1}\phi dx.
\]
Thus, after making $n\rightarrow\infty$ we obtain%
\begin{equation}
\int_{B_{R}}\left\vert w\right\vert ^{p-2}\nabla w\cdot\nabla\phi
dx=\int_{B_{R}}w^{p-1}\phi dx. \label{weakBr}%
\end{equation}

We remark that the Strong Maximum Principle (see \cite{Vazquez}) really
implies that $w>0$ in $B_{R}.$

Now, let $e_{R}\in W_{0}^{1,p}(B_{R})\cap C^{1}(\overline{B_{R}})$ be a
positive first eingenfunction of the $p$-Laplacian for the ball $B_{R}.$ Since
$C_{0}^{\infty}(B_{R})$ is dense in $W_{0}^{1,p}(B_{R})$ the equality
(\ref{weakBr}) is also valid for all $\phi\in W_{0}^{1,p}(B_{R}).$ In
particular, it is valid for $\phi=e_{R}^{p}/w^{p-1}$ (see Remark \ref{lopital}
after this proof).

It follows from Picone's inequality that%
\begin{equation}
\int_{B_{R}}\left\vert \nabla e_{R}\right\vert ^{p}dx\geq\int_{B_{R}%
}\left\vert w\right\vert ^{p-2}\nabla w\cdot\nabla\left(  \frac{e_{R}^{p}%
}{w^{p-1}}\right)  dx. \label{Rpicone}%
\end{equation}
Hence, (\ref{weakBr}) yields%
\[
\lambda_{R}\int_{B_{R}}e_{R}^{p}dx\geq\int_{B_{R}}w^{p-1}\frac{e_{R}^{p}%
}{w^{p-1}}dx=\int_{B_{R}}e_{R}^{p}dx
\]
that is, $\lambda_{R}\geq1$ which contradicts (\ref{lambdaR<1}).
\end{proof}

\begin{remark}
\label{lopital}The quotient $e_{R}/w$ of $C^{1}$ functions is well-defined in
$\Omega$ (since $w>0$ there) and at the points of the boundary $\partial
B_{R}$ where $w$ is null. In fact, since $e_{R}=0$ on $\partial B_{R}$ this
fact is a consequence of the Hopf's Boundary Lemma (see \cite{Vazquez} again):
if $y\in\partial B_{R}$ is such that $w(y)=0$ then any inward directional
derivative of both $e_{R}$ and $w$ is positive. Thus, L'H\^{o}pital's rule
implies that
\[
\lim\limits_{\substack{x\rightarrow y\\x\in B_{R}}}\dfrac{e_{R}(x)}{w(x)}>0.
\]

\end{remark}

\begin{lemma}
\label{IntegUq}Let $\left\{  u_{q}\right\}  _{q\in\lbrack1,p)\cup(p,p^{\ast}%
)}$ be a family of positive solutions of the Lane-Endem problem (\ref{lambdap}%
) and define, for each $q\in\lbrack1,p)\cup(p,p^{\ast}),$ the function
$U_{q}:=\dfrac{u_{q}}{\left\Vert u_{q}\right\Vert _{\infty}}.$ Then $U_{q}$
converges to $e_{p}$ in $C^{1}(\overline{\Omega})$ as $q\rightarrow p.$
Moreover,
\begin{equation}%
%TCIMACRO{\dint _{\Omega}}%
%BeginExpansion
{\displaystyle\int_{\Omega}}
%EndExpansion
\frac{U_{q}^{p}-U_{q}^{q}}{q-p}dx\rightarrow%
%TCIMACRO{\dint _{\Omega}}%
%BeginExpansion
{\displaystyle\int_{\Omega}}
%EndExpansion
e_{p}^{p}\left\vert \ln e_{p}\right\vert dx\text{ \ \ as \ }q\rightarrow p.
\label{asy}%
\end{equation}

\end{lemma}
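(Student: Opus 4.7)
The plan is to first establish the $C^1$ convergence $U_q\to e_p$ by a compactness-and-uniqueness argument, and then derive (\ref{asy}) from this via the mean value theorem together with dominated convergence.

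For the $C^1$ convergence, I would start from the equation satisfied by $U_q$,
\[
-\Delta_p U_q = \lambda_p \|u_q\|_\infty^{q-p}\, U_q^{q-1}\ \text{in }\Omega,\qquad U_q=0\ \text{on }\partial\Omega,
\]
with $\|U_q\|_\infty=1$. Combining Lemmas \ref{A}, \ref{uppersub}, and \ref{uppersup} (using uniqueness $u_q=v_q$ in the sub-linear case), the scalar $c_q:=\lambda_p\|u_q\|_\infty^{q-p}$ lies in a compact subinterval of $(0,\infty)$ as $q\to p$. Since $0<U_q\le 1$, the right-hand side is uniformly bounded in $L^\infty$, so the estimates of \cite{Lieberman} yield $\|U_q\|_{C^{1,\beta}(\overline\Omega)}\le K$ for $q$ near $p$. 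Given any sequence $q_n\to p$, compactness of $C^{1,\beta}\hookrightarrow C^1$ produces a subsequence along which $U_{q_n}\to U$ in $C^1(\overline\Omega)$ and $c_{q_n}\to c\in(0,\infty)$. Because $|\nabla U_{q_n}|^{p-2}\nabla U_{q_n}\to|\nabla U|^{p-2}\nabla U$ and $U_{q_n}^{q_n-1}\to U^{p-1}$ uniformly, one may pass to the limit in the weak formulation to get $-\Delta_p U=c\,U^{p-1}$ in $\Omega$ with $U=0$ on $\partial\Omega$. Choosing maximum points $x_{q_n}\in\Omega$ of $U_{q_n}$ and extracting $x_{q_n}\to x_0\in\overline\Omega$, uniform convergence forces $U(x_0)=1$, so $x_0\in\Omega$ and $U\not\equiv 0$. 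The strong maximum principle gives $U>0$ in $\Omega$, and the characterization of $\lambda_p$ as the only eigenvalue admitting a positive eigenfunction (with its simplicity and the normalization $\|U\|_\infty=1=\|e_p\|_\infty$) forces $c=\lambda_p$ and $U=e_p$. Since every subsequence has a further subsequence converging to the same limit, the full family satisfies $U_q\to e_p$ in $C^1(\overline\Omega)$; in particular $\|u_q\|_\infty^{q-p}\to 1$.

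For (\ref{asy}), the mean value theorem applied to $q\mapsto U_q(x)^q$, together with $0<U_q(x)\le 1$, gives
\[
\frac{U_q(x)^p-U_q(x)^q}{q-p}=U_q(x)^{\xi_q(x)}\,|\ln U_q(x)|
\]
for some $\xi_q(x)$ between $p$ and $q$. Since $U_q\to e_p$ uniformly and $\xi_q\to p$, the integrand converges pointwise on $\{e_p>0\}$ to $e_p^p|\ln e_p|$. For dominated convergence I need a $q$-uniform $L^1$ majorant, and this is where the $C^1$ convergence is essential: Hopf's lemma gives $|\nabla e_p|\ge\delta>0$ on $\partial\Omega$, and by $C^1$ convergence there exist a neighborhood $V$ of $\partial\Omega$ and constants $c_0,c_1>0$ such that, for all $q$ close to $p$, $U_q(x)\ge c_0\,\mathrm{dist}(x,\partial\Omega)$ on $V$ and $U_q\ge c_1$ on $\Omega\setminus V$. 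Then $U_q^{\xi_q}|\ln U_q|\le|\ln U_q|\le C+|\ln\mathrm{dist}(\cdot,\partial\Omega)|\in L^1(\Omega)$, and dominated convergence yields (\ref{asy}).

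The main obstacle is the dominated-convergence step: controlling $\ln U_q$ near $\partial\Omega$ uniformly in $q$ requires transferring the Hopf lower bound from $e_p$ to the whole family, which in turn is precisely why $C^1$ (rather than merely $C^0$ or $W^{1,p}$) convergence must be proved in the first step.
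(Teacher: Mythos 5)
Your proof is correct, and its first half (the $C^1$ convergence of $U_q$) is essentially the paper's argument: uniform bounds on $\lambda_p\Vert u_q\Vert_\infty^{q-p}$ from Lemmas \ref{A}, \ref{uppersub}, \ref{uppersup}, Lieberman's $C^{1,\beta}$ estimates, compactness into $C^1(\overline\Omega)$, passage to the limit in the weak formulation, and identification of the limit through the simplicity of the first eigenvalue, with independence of the subsequence. Where you genuinely diverge is in the proof of (\ref{asy}). The paper's domination step is purely elementary: since $0\le U_q\le 1$, one has $\bigl|\frac{U_q^p-U_q^q}{q-p}\bigr|\le\frac{1}{|q-p|}\max_{0\le t\le1}|t^p-t^q|=\frac1p\left(\frac pq\right)^{\frac{q}{q-p}}\to\frac{1}{pe}$, i.e.\ a uniform \emph{constant} majorant, and pointwise convergence is obtained by sandwiching $U_q$ between $e_p\pm\delta$ on compact subsets. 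You instead write the difference quotient via the mean value theorem as $U_q^{\xi_q}|\ln U_q|$ (which gives the pointwise limit very cleanly, arguably more directly than the paper's compact-exhaustion argument), but then discard the factor $U_q^{\xi_q}$ and build an $L^1$ majorant $C+|\ln\operatorname{dist}(\cdot,\partial\Omega)|$ from a uniform Hopf-type lower bound $U_q\ge c_0\operatorname{dist}(\cdot,\partial\Omega)$, transferred from $e_p$ through the $C^1$ convergence. That boundary analysis is valid (the transfer works because $U_q-e_p$ vanishes on $\partial\Omega$ and is small in $C^1$), but it is unnecessary work: your own MVT expression already yields the constant bound $U_q^{\xi_q}|\ln U_q|\le\sup_{0<t\le1}t^{\xi_q}|\ln t|=\frac{1}{e\,\xi_q}\le\frac1e$ since $\xi_q\ge\min(p,q)\ge1$, so the step you single out as the ``main obstacle'' dissolves, and dominated convergence applies with no information about the behavior of $U_q$ near $\partial\Omega$ beyond $0\le U_q\le1$.
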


\begin{proof}
It is easy to verify that
\begin{equation}
\left\{
\begin{array}
[c]{rcll}%
-\Delta_{p}U_{q} & = & \lambda_{p}\left\Vert u_{q}\right\Vert _{\infty}%
^{q-p}U_{q}^{q-1} & \text{in }\Omega,\\
U_{q} & = & 0 & \text{on }\partial\Omega.
\end{array}
\right.  \label{DiriUq}%
\end{equation}
Thus, it follows from Lemmas \ref{A}, \ref{uppersub} and \ref{uppersup} that
\[
0<C_{1}\leq\left\Vert u_{q}\right\Vert _{\infty}^{q-p}\leq C_{2}%
\]
for all $q\in\lbrack1,p)\cup(p,p+\epsilon),$ for some $\epsilon>0,$ being
$C_{1}$ and $C_{2}$ constants that do not depend on $q\in\lbrack
1,p)\cup(p,p+\epsilon).$

Therefore, since the right-hand side of the equation in (\ref{DiriUq}) is
uniformly bounded with respect to $q\in\lbrack1,p)\cup(p,p+\epsilon)$ the
global H\"{o}lder regularity result again implies that $\left\Vert
U_{q}\right\Vert _{C^{1,\beta}(\overline{\Omega})}\leq K,$ where $K$ and
$0<\beta<1$ are also uniform with respect to $q\in\lbrack1,p+\epsilon].$

Hence, compactness of the immersion $C^{1,\alpha}(\overline{\Omega
})\hookrightarrow C^{1}(\overline{\Omega})$ implies that, up to a subsequence,
$U_{q}$ converges in $C^{1}\left(  \overline{\Omega}\right)  $ to a function
$U\geq0$ (as $q\rightarrow p$) with $\left\Vert U\right\Vert _{\infty}=1.$ We
also have that $\lambda_{p}\left\Vert u_{q}\right\Vert _{\infty}%
^{q-p}\rightarrow c\in(\lambda_{p}C_{1},\lambda_{p}C_{2}).$

Taking the limit $q\rightarrow p$ in the weak formulation (\ref{qweak}) with
$\lambda=\lambda_{p}\left\Vert u_{q}\right\Vert _{\infty}^{q-p},$ we obtain%
\[
\int_{\Omega}\left\vert \nabla U\right\vert ^{p-2}\nabla U\cdot\nabla\varphi
dx=c\int_{\Omega}\left\vert U\right\vert ^{p-2}U\varphi dx
\]
for an arbitrary test function $\varphi\in W_{0}^{1,p}\left(  \Omega\right)
.$This proves that $U$ is a nonnegative eigenfunction associated with the
eigenvalue $c$ and such that $\left\Vert U\right\Vert _{\infty}=1.$ But this
fact necessarily implies that $c=\lambda_{p}$ and $U=e_{p}.$ Thus, the
uniqueness of the limits $\lambda_{p}\left\Vert u_{q}\right\Vert _{\infty
}^{q-p}\rightarrow\lambda_{p}$ and $U_{q}\rightarrow e_{p}$ show that these
convergences do not depend on subsequences. Therefore, we conclude that
$\left\Vert u_{q}\right\Vert _{\infty}^{q-p}\rightarrow1$ and that
$U_{q}\rightarrow e_{p}$ in $C^{1}\left(  \overline{\Omega}\right)  .$

In order to prove (\ref{asy}) we firstly observe that%
\[
\left\vert \frac{U_{q}^{p}-U_{q}^{q}}{q-p}\right\vert \leq\frac{1}{\left\vert
q-p\right\vert }\max_{0\leq t\leq1}\left\vert t^{p}-t^{q}\right\vert =\frac
{1}{\left\vert q-p\right\vert }\frac{1}{p}\left(  \frac{p}{q}\right)
^{\frac{q}{q-p}}\left\vert q-p\right\vert =\frac{1}{p}\left(  \frac{p}%
{q}\right)  ^{\frac{q}{q-p}},
\]
implying that $\dfrac{U_{q}^{p}-U_{q}^{q}}{q-p}$ is uniformly bounded with
respect to $q$ close to $p$ with%
\begin{equation}
\limsup_{q\rightarrow p}\left\vert \frac{U_{q}^{p}-U_{q}^{q}}{q-p}\right\vert
\leq\lim_{q\rightarrow p}\frac{1}{p}\left(  \frac{p}{q}\right)  ^{\frac
{q}{q-p}}=\frac{1}{p\exp(1)}. \label{UqBound}%
\end{equation}

Now, by taking into account the convergence $U_{q}\rightarrow e_{p}$ in
$C^{1}(\overline{\Omega}),$ (\ref{asy}) follows from Lebesgue's dominated
convergence theorem if we prove that
\[
\dfrac{1-U_{q}^{q-p}}{q-p}\rightarrow\left\vert \ln e_{p}\right\vert \text{
\ as }q\rightarrow p^{+}\text{ \ a.e. in }\Omega
\]
and%
\[
\dfrac{U_{q}^{p-q}-1}{q-p}\rightarrow\left\vert \ln e_{p}\right\vert \text{
\ as }q\rightarrow p^{-}\text{ \ a.e. in }\Omega.
\]

So, let $\mathcal{K}\subset\Omega$ compact and $0<\delta<\min
\limits_{\mathcal{K}}e_{p}.$ Then
\[
0<\min_{\mathcal{K}}e_{p}-\delta<e_{p}-\delta\leq U_{q}\leq e_{p}+\delta\text{
\ \ in }\mathcal{K}%
\]
for all $q$ sufficiently close to $p.$ Hence, in $\mathcal{K}$ one has
\begin{equation}
-\ln(e_{p}+\delta)\leq\liminf_{q\rightarrow p^{+}}\frac{1-U_{q}^{q-p}}%
{q-p}\leq\limsup_{q\rightarrow p^{+}}\frac{1-U_{q}^{q-p}}{q-p}\leq-\ln
(e_{p}-\delta), \label{asy3}%
\end{equation}
since
\[
\lim_{q\rightarrow p^{+}}\frac{1-(e_{p}+\delta)^{q-p}}{q-p}=-\ln(e_{p}%
+\delta)
\]
and%
\[
\lim_{q\rightarrow p^{+}}\frac{1-(e_{p}-\delta)^{q-p}}{q-p}=-\ln(e_{p}%
-\delta).
\]

Therefore, making $\delta\rightarrow0^{+}$ in (\ref{asy3}) we conclude that%
\[
\lim_{q\rightarrow p^{+}}\frac{1-U_{q}^{q-p}}{q-p}=-\ln e_{p}=\left\vert \ln
e_{p}\right\vert \text{ \ in }\mathcal{K}.
\]
Analogously we prove that%
\[
\lim_{q\rightarrow p^{-}}\dfrac{U_{q}^{p-q}-1}{q-p}=\left\vert \ln
e_{p}\right\vert \text{\ in }\mathcal{K}.
\]

\end{proof}

\begin{lemma}
\label{infsup1}Let $\left\{  u_{q}\right\}  _{q\in\lbrack1,p)\cup(p,p^{\ast}%
)}$ be a family of positive weak solutions of the Lane-Endem problem
(\ref{lambdap}). Then,%
\begin{equation}
\limsup_{q\rightarrow p^{-}}\left\Vert u_{q}\right\Vert _{\infty}\leq
\theta_{p}\leq\liminf_{q\rightarrow p^{+}}\left\Vert u_{q}\right\Vert
_{\infty}. \label{supinf1}%
\end{equation}

\end{lemma}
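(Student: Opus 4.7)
The plan is to exploit Picone's inequality (\ref{Picones}) applied to the renormalized solutions $U_q = u_q/\|u_q\|_\infty$ and the first eigenfunction $e_p$, and then combine the resulting energy inequality with the asymptotic formula (\ref{asy}) from Lemma \ref{IntegUq}. By Lemma \ref{IntegUq}, $U_q\to e_p$ in $C^1(\overline{\Omega})$, and $U_q$ solves (\ref{DiriUq}) weakly; testing that equation against $U_q$ itself yields
\[
\int_{\Omega}|\nabla U_q|^{p}\,dx=\lambda_{p}\|u_q\|_{\infty}^{q-p}\int_{\Omega}U_q^{q}\,dx.
\]
On the other hand, applying (\ref{Picones}) with the (nonnegative, $C^1$) function $U_q$ and the (strictly positive in $\Omega$) function $e_p$, and integrating, the right-hand side becomes
\[
\int_{\Omega}|\nabla e_{p}|^{p-2}\nabla e_{p}\cdot\nabla\!\left(\tfrac{U_q^{p}}{e_{p}^{p-1}}\right)dx=\lambda_{p}\int_{\Omega}e_{p}^{p-1}\cdot\tfrac{U_q^{p}}{e_{p}^{p-1}}\,dx=\lambda_{p}\int_{\Omega}U_q^{p}\,dx,
\]
where the first equality comes from using $U_q^{p}/e_{p}^{p-1}\in W_{0}^{1,p}(\Omega)$ as a test function in the weak form of $-\Delta_{p}e_{p}=\lambda_{p}e_{p}^{p-1}$. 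Combining the two displays gives the key a priori inequality
\begin{equation*}
\|u_q\|_{\infty}^{q-p}\;\geq\;\frac{\int_{\Omega}U_q^{p}\,dx}{\int_{\Omega}U_q^{q}\,dx}.\tag{$\star$}
\end{equation*}

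Next, I take logarithms of ($\star$) and divide by $q-p$, keeping careful track of the sign. Writing $r_q:=\int_{\Omega}U_q^{p}\,dx$ and $s_q:=\int_{\Omega}U_q^{q}\,dx$, the $C^{1}$ convergence $U_q\to e_p$ forces $r_q,s_q\to \|e_p\|_{p}^{p}$, while (\ref{asy}) delivers $(r_q-s_q)/(q-p)\to\int_{\Omega}e_{p}^{p}|\ln e_{p}|\,dx$. Using $\ln(1+x)=x+O(x^{2})$ as $x\to 0$, one obtains
\[
\frac{1}{q-p}\ln\!\frac{r_q}{s_q}=\frac{1}{q-p}\ln\!\Bigl(1+\frac{r_q-s_q}{s_q}\Bigr)\;\longrightarrow\;\frac{1}{\|e_{p}\|_{p}^{p}}\int_{\Omega}e_{p}^{p}|\ln e_{p}|\,dx=\ln\theta_{p}.
\]
For $q>p$ dividing ($\star$) by $q-p>0$ preserves the direction, giving $\liminf_{q\to p^{+}}\ln\|u_q\|_{\infty}\geq\ln\theta_{p}$; for $q<p$ the sign flip yields $\limsup_{q\to p^{-}}\ln\|u_q\|_{\infty}\leq\ln\theta_{p}$. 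Exponentiating produces exactly (\ref{supinf1}).

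The one delicate point — and I expect it to be the main technical obstacle — is the justification of $U_q^{p}/e_{p}^{p-1}$ as an admissible test function. Although $e_{p}$ vanishes on $\partial\Omega$, Hopf's boundary lemma (applied to both $e_p$ and $U_q$, which share the same boundary behavior up to sign of the inward normal derivative) together with $U_q,e_p\in C^{1}(\overline{\Omega})$ shows that near $\partial\Omega$ the quotient $U_q/e_p$ stays bounded, so $U_q^{p}/e_{p}^{p-1}$ is Lipschitz up to the boundary and vanishes there; its gradient is controlled away from and near the boundary via the L'H\^{o}pital-type argument recalled in Remark \ref{lopital}. This places $U_q^{p}/e_{p}^{p-1}$ in $W_{0}^{1,p}(\Omega)\cap L^{\infty}(\Omega)$ and also legitimizes the integrated form of Picone's inequality used above. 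With this technicality dispensed with, the rest of the argument is the straightforward asymptotic manipulation sketched above.
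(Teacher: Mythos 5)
Your proof is correct and takes essentially the same route as the paper: Picone's inequality applied to the pair $(U_q,e_p)$ together with the weak formulations of (\ref{DiriUq}) and of the eigenvalue equation yields exactly the key inequality $\left\Vert u_q\right\Vert_{\infty}^{q-p}\int_{\Omega}U_q^{q}\,dx\geq\int_{\Omega}U_q^{p}\,dx$, and Lemma \ref{IntegUq} supplies the asymptotics, with the admissibility of $U_q^{p}/e_p^{p-1}$ handled by Hopf's lemma just as in the paper. The only difference is cosmetic: you take logarithms and pass to the limit directly, whereas the paper rearranges the inequality into (\ref{uq+})--(\ref{uq-}) and argues by contradiction against a fixed level $L$.
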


\begin{proof}
Let $U_{q}=\dfrac{u_{q}}{\left\Vert u_{q}\right\Vert _{\infty}}$ as in Lemma
\ref{IntegUq}. Applying Picone's inequality to $U_{q}$ and $e_{p}$ one has%
\begin{equation}
\int_{\Omega}\left\vert \nabla U_{q}\right\vert ^{p}dx\geq\int_{\Omega
}\left\vert \nabla e_{p}\right\vert ^{p-2}\nabla e_{p}\cdot\nabla\left(
\frac{U_{q}^{p}}{e_{p}^{p-1}}\right)  dx. \label{pic}%
\end{equation}
(Hopf's boundary lemma again implies that ${U}_{q}^{p}/e_{p}^{p-1}\in
W_{0}^{1,p}(\Omega).$) Therefore, it follows from (\ref{DiriUq}) that
\[
\lambda_{p}\left\Vert u_{q}\right\Vert _{\infty}^{q-p}\int_{\Omega}U_{q}%
^{q}dx\geq\lambda_{p}\int_{\Omega}e_{p}^{p-1}\frac{U_{q}^{p}}{e_{p}^{p-1}%
}dx=\lambda_{p}\int_{\Omega}U_{q}^{p}dx
\]
and from this we obtain%
\begin{equation}
\frac{\left\Vert u_{q}\right\Vert _{\infty}^{q-p}-1}{q-p}\int_{\Omega}%
U_{q}^{q}dx\geq\int_{\Omega}\frac{U_{q}^{p}-U_{q}^{q}}{q-p}dx\text{ \ if
\ }p<q<p^{\ast} \label{uq+}%
\end{equation}
and%
\begin{equation}
\frac{\left\Vert u_{q}\right\Vert _{\infty}^{q-p}-1}{q-p}\int_{\Omega}%
U_{q}^{q}dx\leq\int_{\Omega}\frac{U_{q}^{p}-U_{q}^{q}}{q-p}dx\text{ \ if
\ }1<q<p. \label{uq-}%
\end{equation}

\noindent\textbf{Case }$q\rightarrow p^{+}.$ Let us suppose, by contradiction,
that exist $L<\theta_{p}$ and a sequence $q_{n}\rightarrow p^{+}$ such that
$\left\Vert u_{q_{n}}\right\Vert _{\infty}\leq L.$ Then (\ref{uq+}) and Lemma
\ref{IntegUq} yield%
\[%
%TCIMACRO{\dint _{\Omega}}%
%BeginExpansion
{\displaystyle\int_{\Omega}}
%EndExpansion
e_{p}^{p}\left\vert \ln e_{p}\right\vert dx=\lim\int_{\Omega}\frac{U_{q_{n}%
}^{p}-U_{q_{n}}^{q_{n}}}{q_{n}-p}dx\leq\lim\frac{L^{q_{n}-p}-1}{q_{n}-p}%
\int_{\Omega}U_{q_{n}}^{q_{n}}dx=\ln L%
%TCIMACRO{\dint _{\Omega}}%
%BeginExpansion
{\displaystyle\int_{\Omega}}
%EndExpansion
e_{p}^{p}dx,
\]
that is, $\theta_{p}\leq L,$ thus reaching a contradiction. We have proved the
second inequality in (\ref{supinf1}).

\noindent\textbf{Case }$q\rightarrow p^{-}.$ Analogously, if we suppose that
exist $L>\theta_{p}$ and a sequence $q_{n}\rightarrow p^{-}$ such that
$\left\Vert u_{q_{n}}\right\Vert _{\infty}\geq L,$ then we obtain from
(\ref{uq+}) and Lemma \ref{IntegUq} that%
\[
\ln L%
%TCIMACRO{\dint _{\Omega}}%
%BeginExpansion
{\displaystyle\int_{\Omega}}
%EndExpansion
e_{p}^{p}dx\leq\lim\frac{L^{q_{n}-p}-1}{q_{n}-p}\int_{\Omega}U_{q_{n}}^{q_{n}%
}dx\leq\lim\int_{\Omega}\frac{U_{q_{n}}^{p}-U_{q_{n}}^{q_{n}}}{q_{n}-p}dx=%
%TCIMACRO{\dint _{\Omega}}%
%BeginExpansion
{\displaystyle\int_{\Omega}}
%EndExpansion
e_{p}^{p}\left\vert \ln e_{p}\right\vert dx
\]
and hence $L\leq\theta_{p}.$ This proves the first inequality in
(\ref{supinf1}).
\end{proof}

\begin{lemma}
\label{infsup2}Let $\left\{  u_{q}\right\}  _{q\in\lbrack1,p)\cup(p,p^{\ast}%
)}$ be a family of positive weak solutions of the Lane-Endem problem
(\ref{lambdap}). Then,
\begin{equation}
\limsup_{q\rightarrow p^{+}}\left\Vert u_{q}\right\Vert _{\infty}\leq
\theta_{p}\leq\liminf_{q\rightarrow p^{-}}\left\Vert u_{q}\right\Vert
_{\infty}. \label{supinf2}%
\end{equation}

\end{lemma}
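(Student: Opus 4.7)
The strategy is to mirror the proof of Lemma~\ref{infsup1}, but with the roles of $e_p$ and $U_q$ reversed in Picone's inequality (\ref{Picones}); this produces an inequality pointing in the opposite direction, which yields the complementary bounds on $\|u_q\|_\infty$. I apply (\ref{Picones}) with $u=e_p$ and $v=U_q$ (noting $U_q>0$ in $\Omega$ by the Strong Maximum Principle, and that $e_p^p/U_q^{p-1}\in W_0^{1,p}(\Omega)$ by Hopf's boundary lemma, exactly as in Remark~\ref{lopital}). Integrating the pointwise inequality, using $\int_\Omega|\nabla e_p|^p\,dx=\lambda_p\int_\Omega e_p^p\,dx$, and testing the weak formulation (\ref{DiriUq}) against $\varphi=e_p^p/U_q^{p-1}$, I obtain
\[
\int_\Omega e_p^p\,dx \;\geq\; \|u_q\|_\infty^{q-p}\int_\Omega U_q^{q-p}\,e_p^p\,dx.
\]
Adding and subtracting $\|u_q\|_\infty^{q-p}\int_\Omega e_p^p\,dx$ on the left, then dividing by $(q-p)$, produces
\[
\frac{1-\|u_q\|_\infty^{q-p}}{q-p}\int_\Omega e_p^p\,dx + \|u_q\|_\infty^{q-p}\int_\Omega\frac{1-U_q^{q-p}}{q-p}\,e_p^p\,dx \;\geq\; 0 \quad (q>p),
\]
with the reverse inequality $\leq 0$ for $q<p$ (since $q-p<0$ flips the sign).

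The next step is to verify that $\int_\Omega\frac{1-U_q^{q-p}}{q-p}\,e_p^p\,dx\to\int_\Omega e_p^p|\ln e_p|\,dx$ as $q\to p$. A one-parameter mean value calculation gives $\frac{1-U_q^{q-p}}{q-p}=-U_q^{\eta_q}\ln U_q$ for some $\eta_q$ lying between $0$ and $q-p$, and this quantity converges a.e.\ on $\Omega$ to $-\ln e_p=|\ln e_p|$ because $U_q\to e_p$ in $C^1(\overline\Omega)$. For a dominating function, the $C^1$ convergence combined with Hopf's lemma (applied to both $U_q$ and $e_p$) provides uniform two-sided bounds on $U_q/e_p$ on $\overline\Omega$ for $q$ close to $p$; hence $|\ln U_q|\leq|\ln e_p|+C$, so the integrand is dominated by $e_p^p(|\ln e_p|+C)$, which is integrable because $e_p$ vanishes only of order $d(x,\partial\Omega)$ on $\partial\Omega$. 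A similar analysis controls $U_q^{\eta_q}$ (for $q<p$ the small negative power is absorbed by the $e_p^p$ weight), and dominated convergence yields the claimed limit.

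To conclude, I combine this with $\|u_q\|_\infty^{q-p}\to 1$ from Lemma~\ref{IntegUq} and argue by contradiction. For $q\to p^+$, suppose $\limsup_{q\to p^+}\|u_q\|_\infty>\theta_p$ and extract a sequence $q_n\to p^+$ with $\|u_{q_n}\|_\infty\to M\in(\theta_p,+\infty]$; if $M<\infty$ then the inequality passes to the limit as $-\ln M\int_\Omega e_p^p\,dx+\int_\Omega e_p^p|\ln e_p|\,dx\geq 0$, i.e.\ $\ln M\leq\ln\theta_p$, while if $M=+\infty$ the first term tends to $-\infty$ and the second stays bounded, so $\geq 0$ fails; either way a contradiction. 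For $q\to p^-$, Lemma~\ref{A} forces $\liminf\|u_q\|_\infty\geq A>0$, so if $\liminf_{q\to p^-}\|u_q\|_\infty<\theta_p$ one extracts $\|u_{q_n}\|_\infty\to L\in(0,\theta_p)$; the flipped inequality then limits to $-\ln L\int_\Omega e_p^p\,dx+\int_\Omega e_p^p|\ln e_p|\,dx\leq 0$, i.e.\ $\ln L\geq\ln\theta_p$, again contradicting $L<\theta_p$. The most delicate step is the dominated convergence: because $|\ln U_q|$ is singular at $\partial\Omega$, it is crucial that $U_q\to e_p$ holds in the $C^1(\overline\Omega)$ topology rather than merely uniformly, so that $U_q/e_p$ stays bounded up to the boundary and the dominating function remains integrable.
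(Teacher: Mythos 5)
Your proof is correct and follows essentially the same route as the paper: apply Picone's inequality with the roles of $e_p$ and $U_q$ interchanged, test (\ref{DiriUq}) with $e_p^p/U_q^{p-1}$, pass to the limit using $U_q\to e_p$ in $C^1(\overline{\Omega})$, $\|u_q\|_\infty^{q-p}\to 1$ and the analogue of (\ref{asy}), and conclude by contradiction. The only differences are bookkeeping: the paper keeps the weight $(e_p/U_q)^p$ and reuses (\ref{asy}) and (\ref{UqBound}) together with the uniform convergence $e_p/U_q\to 1$ instead of your mean-value/dominated-convergence argument, and it sidesteps your case $M=+\infty$ by fixing a level $L$ with $\|u_{q_n}\|_\infty\ge L$ and using $\|u_{q_n}\|_\infty^{q_n-p}\ge L^{q_n-p}$.
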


\begin{proof}
By applying Picone's inequality again, but interchanging $U_{q}$ with $e_{q}$
in (\ref{pic}), the lemma follows similarly. In fact, we obtain
\begin{equation}
\frac{\left\Vert u_{q}\right\Vert _{\infty}^{q-p}-1}{q-p}\int_{\Omega}%
U_{q}^{q}(e_{p}/U_{q})^{p}dx\leq\int_{\Omega}\frac{U_{q}^{p}-U_{q}^{q}}%
{q-p}(e_{p}/U_{q})^{p}dx\text{ \ if \ }p<q<p^{\ast} \label{uq++}%
\end{equation}
and%
\begin{equation}
\frac{\left\Vert u_{q}\right\Vert _{\infty}^{q-p}-1}{q-p}\int_{\Omega}%
U_{q}^{q}(e_{p}/U_{q})^{p}dx\geq\int_{\Omega}\frac{U_{q}^{p}-U_{q}^{q}}%
{q-p}(e_{p}/U_{q})^{p}dx\text{ \ if \ }1\leq q<p. \label{uq--}%
\end{equation}
Note that the uniform convergence $U_{q}\rightarrow e_{p}$ in $\overline
{\Omega}$ together with the Hopf's boundary lemma guarantee that $e_{p}%
/U_{q}\rightarrow1$ uniformly in $\overline{\Omega}.$ Thus, it follows that
\[
\int_{\Omega}U_{q}^{q}(e_{p}/U_{q})^{p}dx\rightarrow\int_{\Omega}e_{p}%
^{p}dx,\text{ \ as \ }q\rightarrow p
\]
and%
\[
\int_{\Omega}\frac{U_{q}^{p}-U_{q}^{q}}{q-p}(e_{p}/U_{q})^{p}dx\rightarrow%
%TCIMACRO{\dint _{\Omega}}%
%BeginExpansion
{\displaystyle\int_{\Omega}}
%EndExpansion
e_{p}^{p}\left\vert \ln e_{p}\right\vert dx,\text{ \ as \ }q\rightarrow p
\]
according to (\ref{asy}) and (\ref{UqBound}).

Thus, if we suppose that exist $L>\theta_{p}$ and $q_{n}\rightarrow p^{+}$
such that $\left\Vert u_{q_{n}}\right\Vert _{\infty}\geq L,$ the uniform
convergence $U_{q_{n}}\rightarrow e_{p}$ together with (\ref{uq++}) imply
that
\begin{equation}
\ln L=\lim\frac{L^{q_{n}-p}-1}{q_{n}-p}\leq\frac{\int_{\Omega}e_{p}%
^{p}\left\vert \ln e_{p}\right\vert dx}{\int_{\Omega}e_{p}^{p}dx}
\label{qasy2}%
\end{equation}
and hence we arrive at the contradiction $L\leq\theta_{p}.$ Therefore, the
first inequality in (\ref{supinf2}) holds.

On the other hand, if we assume, by contradiction again, the existence of
$L<\theta_{p}$ and $q_{n}\rightarrow p^{-}$ such that $\ \left\Vert u_{q_{n}%
}\right\Vert _{\infty}\leq L$ then it follows from (\ref{uq--}) that
\begin{align*}
\ln L\int_{\Omega}e_{p}^{p}dx  &  =\lim\frac{L^{q_{n}-p}-1}{q_{n}-p}%
\int_{\Omega}U_{q_{n}}^{q_{n}}(e_{p}/U_{q_{n}})^{p}dx\\
&  \geq\lim\int_{\Omega}\frac{U_{q_{n}}^{p}-U_{q_{n}}^{q_{n}}}{q_{n}-p}%
(e_{p}/U_{q_{n}})^{p}dx=%
%TCIMACRO{\dint _{\Omega}}%
%BeginExpansion
{\displaystyle\int_{\Omega}}
%EndExpansion
e_{p}^{p}\left\vert \ln e_{p}\right\vert dx.
\end{align*}
Since this implies that $L\geq\theta_{p}$ we obtain a contradiction, proving
thus the second inequality in (\ref{supinf2}).
\end{proof}

It is worth to mention that in the Laplacian case $p=2$ the self-adjointness
of this operator produces
\[
\lim_{q\rightarrow2}\left\Vert u_{q}\right\Vert _{\infty}=\theta_{2}%
\]
directly. Such argument has already appeared in \cite{DancerDuMa}, where the
asymptotic behavior of positive solutions of a logistical type problem for the
Laplacian was studied. In fact,
\[
\lambda_{2}\left\Vert u_{q}\right\Vert _{\infty}^{q-2}\int_{\Omega}U_{q}%
^{q-1}e_{2}dx=\int_{\Omega}\nabla U_{q}\cdot\nabla e_{2}dx=\int_{\Omega}\nabla
e_{2}\cdot\nabla U_{q}dx=\lambda_{2}\int_{\Omega}e_{2}U_{q}dx
\]
leads to
\[
\frac{\left\Vert u_{q}\right\Vert _{\infty}^{q-2}-1}{q-2}\int_{\Omega}%
U_{q}^{q-1}e_{2}dx=\int_{\Omega}\frac{1-U_{q}^{q-2}}{q-2}e_{2}U_{q}dx.
\]
Thus, if $\left\Vert u_{q_{n}}\right\Vert _{\infty}\rightarrow L$ then
\[
\ln L=\lim_{n}\frac{\left\Vert u_{q_{n}}\right\Vert _{\infty}^{q_{n}-2}%
-1}{q_{n}-2}=\lim_{n}\frac{\int_{\Omega}\frac{1-U_{q_{n}}^{q_{n}-2}}{q_{n}%
-2}e_{2}U_{q_{n}}dx}{\int_{\Omega}U_{q_{n}}^{q_{n}-1}e_{2}dx}=\left\Vert
e_{2}\right\Vert _{2}^{-2}\int_{\Omega}e_{2}^{2}\left\vert \ln e_{2}%
\right\vert dx
\]
proving that $\left\Vert u_{q}\right\Vert _{\infty}\rightarrow\theta_{2}.$

\begin{theorem}
\label{mainTheo}Let $\left\{  u_{q}\right\}  _{q\in\lbrack1,p)\cup(p,p^{\ast
})}$ be a family of positive weak solutions of the Lane-Endem problem
(\ref{lambdap}). Then,
\[
\lim_{q\rightarrow p}u_{q}=\theta_{p}e_{p},
\]
the convergence being in $C^{1}(\overline{\Omega}).$
\end{theorem}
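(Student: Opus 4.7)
The plan is to assemble the three preceding lemmas, whose combined output essentially gives the theorem with no additional work beyond a short limit argument. The key observation is that Lemmas \ref{infsup1} and \ref{infsup2} supply matching one-sided bounds on $\|u_q\|_\infty$: the first yields $\limsup_{q\to p^-}\|u_q\|_\infty \le \theta_p \le \liminf_{q\to p^+}\|u_q\|_\infty$ while the second supplies the reverse inequalities $\limsup_{q\to p^+}\|u_q\|_\infty \le \theta_p \le \liminf_{q\to p^-}\|u_q\|_\infty$. Chaining these four inequalities immediately forces
\[
\lim_{q\to p^-}\|u_q\|_\infty = \theta_p = \lim_{q\to p^+}\|u_q\|_\infty,
\]
so that $\|u_q\|_\infty \to \theta_p$ as $q\to p$ without passing to subsequences.

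Next, I invoke Lemma \ref{IntegUq}, which already provides the $C^1(\overline{\Omega})$-convergence $U_q := u_q/\|u_q\|_\infty \to e_p$ as $q\to p$. Writing $u_q = \|u_q\|_\infty\, U_q$, the convergence of the scalar factor to $\theta_p$ together with the $C^1$-convergence of the normalized functions yields $u_q \to \theta_p e_p$ in $C^1(\overline{\Omega})$, since multiplication by a bounded convergent sequence of scalars preserves uniform convergence of both the functions and their gradients. This completes the proof.

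There is no real obstacle at this stage: all of the analytic work, in particular the delicate blow-up compactness (Lemma \ref{uppersup}), the Picone-based sandwich estimates \eqref{uq+}--\eqref{uq-} and \eqref{uq++}--\eqref{uq--}, and the identification of the limit $\theta_p$ via the integral \eqref{asy}, has already been carried out. The final theorem is simply the packaging step that turns the two separate conclusions ``shape converges to $e_p$'' and ``amplitude converges to $\theta_p$'' into the single statement $u_q \to \theta_p e_p$ in $C^1(\overline{\Omega})$.
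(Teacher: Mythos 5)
Your proposal is correct and follows essentially the same route as the paper: Lemmas \ref{infsup1} and \ref{infsup2} pin down $\lim_{q\to p}\left\Vert u_{q}\right\Vert _{\infty}=\theta_{p}$, and the shape information comes from Lemma \ref{IntegUq}. Your final assembly is in fact slightly more economical than the paper's, which re-runs the uniform $C^{1,\beta}$ bound and subsequence/identification argument for $u_{q}$ itself, whereas you simply multiply the already-proved $C^{1}(\overline{\Omega})$ limit $U_{q}\rightarrow e_{p}$ by the convergent scalar factor $\left\Vert u_{q}\right\Vert _{\infty}\rightarrow\theta_{p}$; both are valid.
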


\begin{proof}
Lemmas \ref{infsup1} and \ref{infsup2} imply that
\begin{equation}
\lim_{q\rightarrow p}\left\Vert u_{p}\right\Vert _{\infty}\rightarrow
\theta_{p}. \label{limuq}%
\end{equation}
Thus, the right-hand side of (\ref{lambdap}) is bounded for all $q$
sufficiently close to $p.$ This fact, combined with the global H\"{o}lder
regularity ensures that $u_{q}$ is uniformly bounded in $C^{1,\beta}%
(\overline{\Omega})$ (with respect to $q$) for some $0<\beta<1.$ We conclude,
as in the proof of Lemma \ref{IntegUq}, that $u_{q}$ converges in
$C^{1}\left(  \overline{\Omega}\right)  $ to a positive weak solution $u\in
C^{1}(\overline{\Omega})\cap W_{0}^{1,p}(\Omega)$ of the eigenvalue problem
(\ref{peigen}), when $q\rightarrow p.$ Thus, $u=ke_{p}$ for some $k>0.$ But,
according to (\ref{limuq}) $k=\theta_{p},$ implying that the limit function is
always $\theta_{p}e_{p}$ (that is, it does not depend on subsequences).
Therefore, $\lim_{q\rightarrow p}u_{q}=\theta_{p}e_{p}$ in $C^{1}%
(\overline{\Omega}).$
\end{proof}

\begin{remark}
The estimate
\[
(\lambda_{p}\left\Vert \xi_{p}\right\Vert _{\infty})^{-1}\leq\liminf
\limits_{q\rightarrow p^{-}}\left\Vert u_{q}\right\Vert _{q}^{q}%
\]
where $\xi_{p}$ is the first eigenfunction normalized by the $W_{0}^{1,p}$
norm, was proved in \cite{Anello}. Since $\liminf\limits_{q\rightarrow p^{-}%
}\left\Vert u_{q}\right\Vert _{q}^{q}=\left(  \theta_{p}\left\Vert
e_{p}\right\Vert _{p}\right)  ^{p}$ (as consequence of Theorem \ref{mainTheo})
and $(\lambda_{p}\left\Vert \xi_{p}\right\Vert _{\infty})^{-1}=\left\Vert
e_{p}\right\Vert _{p}^{p}<\left(  \theta_{p}\left\Vert e_{p}\right\Vert
_{p}\right)  ^{p}$, we see this estimate is not sharp.
\end{remark}

\section{Applications}

A consequence of Theorem \ref{mainTheo} is the differentiability of the
function $q\rightarrow\lambda_{q}$ at $q=p,$ where
\[
\lambda_{q}=\min\left\{  \mathcal{R}_{q}(u):u\in W_{0}^{1,p}\left(
\Omega\right)  \setminus\{0\}\right\}
\]
and $\mathcal{R}_{q}(u):=(\left\Vert \nabla u\right\Vert _{p}/\left\Vert
u\right\Vert _{q})^{p}$ is the Rayleigh quotient associated with the immersion
$W_{0}^{1,p}(\Omega)\hookrightarrow L^{q}(\Omega),$ which is compact if
$q\in\lbrack1,p^{\ast}).$

\begin{corollary}
\label{ddiffcp}The application $q\in\lbrack1,p^{\ast})\rightarrow\lambda_{q}$
is differentiable at $q=p$ and
\begin{equation}
\frac{d}{dq}\left[  \lambda_{q}\right]  _{q=p}=\lambda_{p}\ln(\theta
_{p}\left\Vert e_{p}\right\Vert _{p}). \label{diffcp}%
\end{equation}

\end{corollary}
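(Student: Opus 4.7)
The plan is to deduce the derivative from the identity $\|v_q\|_q = (\lambda_q/\lambda_p)^{1/(q-p)}$ recorded in (\ref{lquq}), by evaluating both sides in the limit $q\to p$ via Theorem \ref{mainTheo}.

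More precisely, first I would take the family $\{v_q\}_{q\in[1,p)\cup(p,p^{\ast})}$ defined by (\ref{uq}). Each $v_q$ is a positive weak solution of the resonant Lane-Emden problem (\ref{lambdap}), so Theorem \ref{mainTheo} applies and gives $v_q\to \theta_p e_p$ in $C^1(\overline{\Omega})$ as $q\to p$. In particular, since $v_q$ and $\theta_p e_p$ are bounded in $L^{\infty}(\Omega)$ uniformly in $q$ near $p$, the uniform convergence $v_q\to\theta_p e_p$ in $\overline{\Omega}$ together with $q\to p$ implies $v_q^q\to (\theta_p e_p)^p$ uniformly on $\overline{\Omega}$. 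Integrating and taking a $q$-th, resp.\ $p$-th, root, we obtain
\[
\|v_q\|_q \;\longrightarrow\; \|\theta_p e_p\|_p \;=\; \theta_p\,\|e_p\|_p \qquad\text{as }q\to p.
\]

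Combined with (\ref{lquq}) this yields
\[
\left(\frac{\lambda_q}{\lambda_p}\right)^{\frac{1}{q-p}} \longrightarrow \theta_p\,\|e_p\|_p.
\]
Taking logarithms, and noting that $\theta_p\|e_p\|_p>0$ and $\lambda_q/\lambda_p\to 1$ (so that the logarithm is well defined and continuous near the limit), we get
\[
\lim_{q\to p}\frac{\ln \lambda_q-\ln\lambda_p}{q-p}=\ln\bigl(\theta_p\,\|e_p\|_p\bigr).
\]
This is exactly the statement that $q\mapsto \ln\lambda_q$ is differentiable at $q=p$ with that derivative. Since $\lambda_q=\lambda_p\exp(\ln\lambda_q-\ln\lambda_p)$ and the exponential is differentiable, the chain rule gives differentiability of $q\mapsto \lambda_q$ at $q=p$ with
\[
\frac{d}{dq}[\lambda_q]_{q=p}=\lambda_p\,\ln\bigl(\theta_p\,\|e_p\|_p\bigr),
\]
which is (\ref{diffcp}).

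The only nontrivial point is the passage $\|v_q\|_q\to \theta_p\|e_p\|_p$ with a varying exponent, and this is essentially immediate from the $C^1$ (hence uniform) convergence of $v_q$ provided by Theorem \ref{mainTheo} together with the boundedness of the domain. Everything else is algebraic manipulation of (\ref{lquq}) and continuity of $\log$ and $\exp$.
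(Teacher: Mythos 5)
Your proof is correct and follows essentially the same route as the paper: both use the identity $\left\Vert v_{q}\right\Vert _{q}=(\lambda_{q}/\lambda_{p})^{\frac{1}{q-p}}$ from (\ref{lquq}) together with the convergence $v_{q}\rightarrow\theta_{p}e_{p}$ from Theorem \ref{mainTheo}, then pass to logarithms to identify the derivative of $\ln\lambda_{q}$ at $q=p$ and conclude (\ref{diffcp}). The only difference is that you spell out the variable-exponent limit $\left\Vert v_{q}\right\Vert _{q}\rightarrow\theta_{p}\left\Vert e_{p}\right\Vert _{p}$, which the paper takes as immediate from the $C^{1}$ convergence.
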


\begin{proof}
We recall that for each $q\in\lbrack1,p)\cup(p,p^{\ast})$ the function
\[
v_{q}=\left(  \frac{\lambda_{q}}{\lambda_{p}}\right)  ^{\frac{1}{q-p}}w_{q}%
\]
is a positive weak solution of the resonant Lane-Emden problem (\ref{lambdap}%
), where $w_{q}\in W_{0}^{1,p}\left(  \Omega\right)  \cap C^{1,\alpha}\left(
\overline{\Omega}\right)  $ satisfies%
\[
\left\Vert w_{q}\right\Vert _{q}=1\text{ \ and \ }\mathcal{R}_{q}%
(w_{q})=\lambda_{q}.
\]

Thus,
\[
\lim_{q\rightarrow p}\left\Vert v_{q}\right\Vert _{q}=\lim_{q\rightarrow
p}\left(  \frac{\lambda_{q}}{\lambda_{p}}\right)  ^{\frac{1}{q-p}}=\exp\left(
\lim_{q\rightarrow p}\frac{\ln\lambda_{q}-\ln\lambda_{p}}{q-p}\right)  .
\]
On the other hand, it follows from Theorem \ref{mainTheo} that
\[
\lim_{q\rightarrow p}\left\Vert v_{q}\right\Vert _{q}=\theta_{p}\left\Vert
e_{p}\right\Vert _{p}.
\]

Therefore,
\[
\lim_{q\rightarrow p}\frac{\ln\lambda_{q}-\ln\lambda_{p}}{q-p}=\ln(\theta
_{p}\left\Vert e_{p}\right\Vert _{p})
\]
what means that $\ln\lambda_{q}$ is differentiable at $q=p$ and $\frac{d}%
{dq}\left[  \ln\lambda_{q}\right]  _{q=p}=\ln(\theta_{p}\left\Vert
e_{p}\right\Vert _{p}).$ But this is equivalent to differentiability of
$\lambda_{q}$ at $q=p$ with $\frac{d}{dq}\left[  \lambda_{q}\right]  _{q=p}$
given by (\ref{diffcp}).
\end{proof}

Another consequence of Theorem \ref{mainTheo} is the complete description, in
the $C^{1}(\overline{\Omega})$ space, of the asymptotic behavior for the
positive solutions of the non-resonant problem ($0<\lambda\neq\lambda_{p}$):
\begin{equation}
\left\{
\begin{array}
[c]{rcll}%
-\Delta_{p}u & = & \lambda\left\vert u\right\vert ^{q-2}u & \text{in }\Omega\\
u & = & 0 & \text{on }\partial\Omega.
\end{array}
\right.  \label{LE}%
\end{equation}

\begin{corollary}
Let $\left\{  u_{\lambda,q}\right\}  _{q\in\lbrack1,p)\cup(p,p^{\ast})}$ be a
family of positive solutions of (\ref{LE}). Then%
\[
\lim\limits_{q\rightarrow p^{-}}\left\Vert u_{\lambda,q}\right\Vert _{C^{1}%
}=\left\{
\begin{array}
[c]{ccc}%
0 & \text{if} & \lambda<\lambda_{p}\\
\infty & \text{if} & \lambda>\lambda_{p}%
\end{array}
\right.  \qquad\text{and}\qquad\lim\limits_{q\rightarrow p^{+}}\left\Vert
u_{\lambda,q}\right\Vert _{C^{1}}=\left\{
\begin{array}
[c]{lll}%
\infty & \text{if} & \lambda<\lambda_{p}\\
0 & \text{if} & \lambda>\lambda_{p}.
\end{array}
\right.
\]

\end{corollary}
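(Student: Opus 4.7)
The plan is to reduce the non-resonant problem to the resonant one via scaling and then invoke Theorem \ref{mainTheo} directly.

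First I would exploit the $(p-1)$-homogeneity of $\Delta_p$. A direct computation shows that for $c>0$, the function $c\,u_q$ solves \eqref{LE} with parameter $\lambda$ if and only if $u_q$ solves \eqref{lambdap} and $c^{p-q}\lambda_p = \lambda$. Since we are assuming $\lambda\ne\lambda_p$, this relation is nontrivial and, for each $q\ne p$, forces
\[
u_{\lambda,q} = \left(\frac{\lambda}{\lambda_p}\right)^{\frac{1}{p-q}} u_q,
\]
setting up a bijection between families of positive solutions of \eqref{LE} and of \eqref{lambdap}. So from any family $\{u_{\lambda,q}\}$ in the statement I recover a family $\{u_q\}$ of positive solutions of the resonant problem.

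Next I would apply Theorem \ref{mainTheo} to this $\{u_q\}$, obtaining $u_q \to \theta_p e_p$ in $C^1(\overline\Omega)$; in particular $\|u_q\|_{C^1}\to \theta_p\|e_p\|_{C^1}$, a finite positive constant. Consequently, the limiting behavior of
\[
\|u_{\lambda,q}\|_{C^1} = \left(\frac{\lambda}{\lambda_p}\right)^{\frac{1}{p-q}} \|u_q\|_{C^1}
\]
is entirely dictated by the scalar factor $(\lambda/\lambda_p)^{1/(p-q)}$, since $\|u_q\|_{C^1}$ contributes neither $0$ nor $\infty$.

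Finally I would dispatch the four cases by a routine analysis of the scalar factor. Writing $\rho=\lambda/\lambda_p$, we have $\rho<1$ when $\lambda<\lambda_p$ and $\rho>1$ when $\lambda>\lambda_p$, while $1/(p-q)\to+\infty$ as $q\to p^-$ and $1/(p-q)\to-\infty$ as $q\to p^+$; combining signs and magnitudes yields precisely the four limits claimed. There is no real obstacle: the substantive analytic content has already been absorbed into Theorem \ref{mainTheo}, and the only subtlety is verifying that the scaling correspondence of the first step genuinely sends \emph{arbitrary} families to arbitrary families, which is immediate from the uniqueness of $c$ given $q\ne p$.
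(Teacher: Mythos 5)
Your proof is correct and follows essentially the same route as the paper: reduce to the resonant problem via the scaling $u_{\lambda,q}=(\lambda/\lambda_{p})^{\frac{1}{p-q}}u_{q}$ (the paper's (\ref{anymmu})) and let the scalar factor dictate the four limits. The only, harmless, difference is that you invoke the full convergence $u_{q}\rightarrow\theta_{p}e_{p}$ of Theorem \ref{mainTheo} to keep $\left\Vert u_{q}\right\Vert _{C^{1}}$ away from $0$ and $\infty$, whereas the paper uses only the lower bound of Lemma \ref{A} and the uniform $C^{1,\beta}(\overline{\Omega})$ estimate for the same purpose.
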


\begin{proof}
It follows from (\ref{anymmu}) and Lemma \ref{A} that%
\[
\left\Vert u_{\lambda,q}\right\Vert _{C^{1}}=\left(  \frac{\lambda}%
{\lambda_{p}}\right)  ^{\frac{1}{p-q}}\left\Vert u_{q}\right\Vert _{C^{1}}%
\geq\left(  \frac{\lambda}{\lambda_{p}}\right)  ^{\frac{1}{p-q}}\left\Vert
u_{q}\right\Vert _{\infty}\geq\left(  \frac{\lambda}{\lambda_{p}}\right)
^{\frac{1}{p-q}}A,
\]
for some positive constant $A$ which does not depend on $q$ close to $p.$
Thus, $\left\Vert u_{\lambda,q}\right\Vert _{C^{1}}\rightarrow\infty$ when
$q\rightarrow p^{-}$ and $\lambda>\lambda_{p}$ or when $q\rightarrow p^{+}$
and $\lambda<\lambda_{p}.$

Since $u_{q}$ is uniformly bounded in $C^{1,\beta}(\overline{\Omega})$ with
respect to $q$ close to $p$, the continuity of the immersion $C^{1,\beta
}(\overline{\Omega})\hookrightarrow C^{1}(\overline{\Omega})$ implies that
$\left\Vert u_{q}\right\Vert _{C^{1}}\leq K$ for some positive constant $K$
that does not depend on $q$ close to $p.$ Hence, when $q\rightarrow p^{-}$ and
$\lambda<\lambda_{p}$ or when $q\rightarrow p^{+}$ and $\lambda>\lambda_{p}$,
we have%
\[
\left\Vert u_{\lambda,q}\right\Vert _{C^{1}}=\left(  \frac{\lambda}%
{\lambda_{p}}\right)  ^{\frac{1}{p-q}}\left\Vert u_{q}\right\Vert _{C^{1}}\leq
K\left(  \frac{\lambda}{\lambda_{p}}\right)  ^{\frac{1}{p-q}}\rightarrow0.
\]

\end{proof}

Our results generalize those in \cite{Anello} and in \cite{GP} to $C^{1}$
norm. Note that in the super-linear case, our results are really more general
than those in \cite{GP} since they do apply to arbitrary families of positive
solutions and not only for ground states.

A third consequence of Theorem \ref{mainTheo} is that it provides a
theoretical method for obtaining approximations for first eigenpairs of the
$p$-Laplacian by solving a non-resonant problem (\ref{LE}) with $\lambda>0$
arbitrary and $q$ close to $p.$ In fact, we have the following corollary.

\begin{corollary}
\label{method}For $1\leq s\leq\infty$ and $\lambda>0$ fixed let $U_{\lambda
,q}:=\dfrac{u_{\lambda,q}}{\left\Vert u_{\lambda,q}\right\Vert _{s}}$ and
$\mu_{\lambda,q}:=\lambda\left\Vert u_{\lambda,q}\right\Vert _{s}^{q-p}.$
Then, as $q\rightarrow p:$
\[
\mu_{\lambda,q}\rightarrow\lambda_{p}\text{ \ \ and \ \ }U_{q}\rightarrow
\dfrac{e_{p}}{\left\Vert e_{p}\right\Vert _{s}}\text{ \ in \ }C^{1}%
(\overline{\Omega}).
\]

\end{corollary}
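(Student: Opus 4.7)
The plan is to reduce the corollary to Theorem \ref{mainTheo} via the scaling that connects the non-resonant problem (\ref{LE}) to the resonant problem (\ref{lambdap}). A quick homogeneity check shows that if $u_q$ is any positive solution of (\ref{lambdap}) then
\[
u_{\lambda,q} = \left(\frac{\lambda}{\lambda_p}\right)^{\frac{1}{p-q}} u_q
\]
is a positive solution of (\ref{LE}); conversely, any positive solution of (\ref{LE}) can be written in this form. This is exactly the identity used in the proof of the previous corollary.

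The first step is to substitute this into $U_{\lambda,q}$. The scaling factor cancels from numerator and denominator, so $U_{\lambda,q} = u_q/\left\Vert u_q\right\Vert_s$. Theorem \ref{mainTheo} yields $u_q \to \theta_p e_p$ in $C^1(\overline{\Omega})$, with $\theta_p>0$ and $e_p>0$ in $\Omega$; hence $\left\Vert u_q\right\Vert_s \to \theta_p \left\Vert e_p\right\Vert_s > 0$ for every $s\in[1,\infty]$ (the case $s=\infty$ being covered by the uniform convergence on $\overline{\Omega}$). Dividing a $C^1$-convergent sequence by positive scalars tending to a positive limit is a continuous operation in $C^1(\overline{\Omega})$, so $U_{\lambda,q}\to e_p/\left\Vert e_p\right\Vert_s$ in $C^1(\overline{\Omega})$.

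For $\mu_{\lambda,q}$ the same substitution gives
\[
\mu_{\lambda,q} = \lambda \left\Vert u_{\lambda,q}\right\Vert_s^{q-p} = \lambda\left(\frac{\lambda}{\lambda_p}\right)^{-1} \left\Vert u_q\right\Vert_s^{q-p} = \lambda_p \left\Vert u_q\right\Vert_s^{q-p}.
\]
Since $\left\Vert u_q\right\Vert_s$ is bounded and bounded away from $0$ for $q$ near $p$, and the exponent $q-p$ tends to $0$, one concludes $\left\Vert u_q\right\Vert_s^{q-p}\to 1$, whence $\mu_{\lambda,q}\to \lambda_p$. There is no real obstacle here: the whole argument is algebraic bookkeeping once the scaling identity and Theorem \ref{mainTheo} are in hand; the only mild subtlety is making sure the case $s=\infty$ is treated, which is automatic from the $C^1$-convergence.
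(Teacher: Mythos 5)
Your proposal is correct and follows the same reduction as the paper: the scaling identity $u_{\lambda,q}=(\lambda/\lambda_p)^{\frac{1}{p-q}}u_q$, the cancellation giving $U_{\lambda,q}=u_q/\left\Vert u_q\right\Vert_s$, and Theorem \ref{mainTheo} to get the $C^1(\overline{\Omega})$ limit $e_p/\left\Vert e_p\right\Vert_s$. The only divergence is in the eigenvalue limit: you conclude $\mu_{\lambda,q}=\lambda_p\left\Vert u_q\right\Vert_s^{q-p}\rightarrow\lambda_p$ directly from $\left\Vert u_q\right\Vert_s\rightarrow\theta_p\left\Vert e_p\right\Vert_s>0$ together with $q-p\rightarrow0$ (so the exponentiated quantity tends to $1$), whereas the paper instead uses the equation satisfied by $U_{\lambda,q}$ and passes to the limit in the Rayleigh quotient $\int_\Omega\left\vert\nabla U_{\lambda,q}\right\vert^p dx\big/\int_\Omega\left\vert U_{\lambda,q}\right\vert^q dx\rightarrow\lambda_p$. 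Both steps rest on the same $C^1$ convergence; yours is slightly more elementary, the paper's has the cosmetic advantage of exhibiting $\mu_{\lambda,q}$ as a Rayleigh-type quotient of the computed approximation, which is the quantity one would evaluate numerically. No gap in either part; your handling of $s=\infty$ via uniform convergence is the right remark.
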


\begin{proof}
A simple scaling argument shows that
\begin{equation}
u_{\lambda,q}:=\left(  \frac{\lambda}{\lambda_{p}}\right)  ^{\frac{1}{p-q}%
}u_{q}, \label{anymmu}%
\end{equation}
where $u_{q}$ is a positive solution of the resonant Lane-Emden problem
(\ref{lambdap}). It follows from this that
\[
U_{\lambda,q}=\frac{u_{q}}{\left\Vert u_{q}\right\Vert _{s}}\text{ \ \ and
\ \ }\mu_{\lambda,q}:=\lambda\left\Vert u_{\lambda,q}\right\Vert _{s}%
^{q-p}=\lambda\frac{\lambda_{p}}{\lambda}\left\Vert u_{q}\right\Vert
_{s}^{q-p}=\lambda_{p}\left\Vert u_{q}\right\Vert _{s}^{q-p}.
\]

But, since $u_{q}\rightarrow\theta_{p}e_{p}$ in $C^{1}(\overline{\Omega})$ as
$q\rightarrow p,$ it follows that $U_{\lambda,q}=\frac{u_{q}}{\left\Vert
u_{q}\right\Vert _{s}}\rightarrow\frac{e_{p}}{\left\Vert e_{p}\right\Vert
_{s}}$ in $C^{1}(\overline{\Omega}).$ Moreover,
\[
\left\{
\begin{array}
[c]{rcll}%
-\Delta_{p}U_{\lambda,q} & = & \mu_{\lambda,q}U_{\lambda,q}^{q-1} & \text{in
}\Omega,\\
U_{\lambda,q} & = & 0 & \text{on }\partial\Omega,
\end{array}
\right.
\]
implies that
\[
\mu_{\lambda,q}=\frac{\int_{\Omega}\left\vert \nabla U_{\lambda,q}\right\vert
^{p}dx}{\int_{\Omega}\left\vert U_{\lambda,q}\right\vert ^{q}dx}%
\rightarrow\frac{\int_{\Omega}\left\vert \nabla e_{p}\right\vert ^{p}dx}%
{\int_{\Omega}\left\vert e_{p}\right\vert ^{p}dx}=\lambda_{p}%
\]
as $q\rightarrow p.$
\end{proof}

Corollary \ref{method} provides a method for obtaining numerical
approximations of the first eigenpair $(\lambda_{p},\frac{e_{p}}{\left\Vert
e_{p}\right\Vert _{s}}).$ In fact, in a first step one can compute a numerical
solution of problem (\ref{LE}) with $q$ close to $p$ and hence, after $L^{s}%
$-normalization one obtains approximations for $\lambda_{p}$ and $\frac{e_{p}%
}{\left\Vert e_{p}\right\Vert _{s}}$ simultaneously.

Of course, a numerical solution of the nonlinear problem (\ref{LE}), for some
$\lambda>0$ fixed, is easier to obtain than directly compute the first
eigenpair of the $p$-Laplacian (by solving the corresponding eigenvalue
problem). As previously mentioned, the advantage here is that $\lambda$ can be
chosen arbitrarily in computational implementations of (\ref{LE}) and does not
need to be close to $\lambda_{p}.$

We emphasize that this approach is well-supported by our results in this
paper, especially in the super-linear case, since it does apply to any family
of positive solutions and not only to ground state. Note that such a
restriction would discourage the application of this method since it would be
necessary to prove that a numerical solution of the super-linear problem is in
fact a ground state.

A similar approach was recently used in \cite{BBEM}, where the iterative sub-
and super-solution method was applied to compute the positive solutions of the
sub-linear problem.

Now, we show that the quotient
\[
\Lambda_{q}:=\lambda\dfrac{\left\Vert u_{\lambda,q}\right\Vert _{q}^{q}%
}{\left\Vert u_{\lambda,q}\right\Vert _{p}^{p}}=\lambda\frac{(\frac{\lambda
}{\lambda_{p}})^{\frac{q}{p-q}}}{(\frac{\lambda}{\lambda_{p}})^{\frac{p}{p-q}%
}}\dfrac{\left\Vert u_{q}\right\Vert _{q}^{q}}{\left\Vert u_{q}\right\Vert
_{p}^{p}}=\lambda_{p}\dfrac{\left\Vert u_{q}\right\Vert _{q}^{q}}{\left\Vert
u_{q}\right\Vert _{p}^{p}},
\]
also converges to $\lambda_{p}$ as $q\rightarrow p.$ Moreover, we estimate the
convergence order in the approximation of $\Lambda_{q}$ by $\mu_{\lambda,q}.$

\begin{proposition}
\textit{There holds:}

\begin{enumerate}
\item[(i)] $\lambda_{p}\leqslant\Lambda_{q}.$

\item[(ii)] $\lim\limits_{q\rightarrow p}\Lambda_{q}=\lambda_{p}.$

\item[(iii)] If $q$ is sufficiently close to $p,$ then%
\[
\left\vert \dfrac{\Lambda_{q}}{\mu_{\lambda,q}}-1\right\vert \leq K\left\vert
q-p\right\vert
\]
for some positive constant $K$ which does not depends on $q.$
\end{enumerate}
\end{proposition}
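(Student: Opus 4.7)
The plan is to handle the three items in order, with (iii) being the only one requiring real work.

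For (i), I would test the weak formulation of (\ref{lambdap}) against $\varphi=u_q$, obtaining $\int_\Omega|\nabla u_q|^p\,dx=\lambda_p\int_\Omega u_q^q\,dx$. Combining this with the Rayleigh characterization (\ref{lambdamin}) evaluated at $u_q$, namely $\lambda_p\int_\Omega u_q^p\,dx\leq\int_\Omega|\nabla u_q|^p\,dx$, yields at once $\lambda_p\leq\lambda_p\|u_q\|_q^q/\|u_q\|_p^p=\Lambda_q$.

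For (ii), Theorem \ref{mainTheo} gives $u_q\to\theta_p e_p$ in $C^1(\overline{\Omega})$, so $\{u_q\}$ is uniformly bounded and dominated convergence delivers $\int_\Omega u_q^q\,dx$ and $\int_\Omega u_q^p\,dx$ both converging to $\|\theta_p e_p\|_p^p>0$; dividing, $\Lambda_q\to\lambda_p$.

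For (iii), I would first use the scaling identity (\ref{anymmu}) to rewrite $\mu_{\lambda,q}=\lambda_p\|u_q\|_s^{q-p}$, which reduces the whole estimate to controlling
\[
\ln\!\frac{\Lambda_q}{\mu_{\lambda,q}}=q\bigl(\ln\|u_q\|_q-\ln\|u_q\|_p\bigr)+(q-p)\bigl(\ln\|u_q\|_p-\ln\|u_q\|_s\bigr).
\]
The second term is trivially $O(|q-p|)$ because, by (ii) and Theorem \ref{mainTheo}, every $\|u_q\|_r$ with $r\in\{p,s\}$ stays bounded and bounded away from $0$. For the first term I would write $\int_\Omega u_q^q\,dx-\int_\Omega u_q^p\,dx=\int_\Omega u_q^p(u_q^{q-p}-1)\,dx$ and Taylor-expand $u_q^{q-p}-1=(q-p)\ln u_q+r_q(x)$ with remainder $|r_q(x)|\leq\tfrac{1}{2}(q-p)^2(\ln u_q)^2\,e^{|q-p|\,|\ln u_q|}$. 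The leading term $(q-p)\int_\Omega u_q^p\ln u_q\,dx$ is $O(|q-p|)$ since the integral is uniformly bounded (by $C^1$-convergence and integrability of $e_p^p|\ln e_p|$). The main obstacle is to bound the remainder integral \emph{uniformly} in $q$: the factor $(\ln u_q)^2$ blows up near $\partial\Omega$, and this requires a uniform-in-$q$ Hopf-type lower bound $u_q(x)\geq c\,\mathrm{dist}(x,\partial\Omega)$, which follows from the $C^1(\overline{\Omega})$-convergence $u_q\to\theta_p e_p$ combined with Hopf's lemma for $\theta_p e_p$ (so that $\partial_\nu u_q$ is uniformly bounded away from $0$ on $\partial\Omega$ for $q$ near $p$). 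With this bound, $u_q^{p\pm|q-p|}(\ln u_q)^2$ is dominated by an integrable function on $\Omega$, giving $|R_q|\leq C(q-p)^2$ and hence $\int_\Omega u_q^q-\int_\Omega u_q^p=O(|q-p|)$. Dividing by the lower-bounded $\int_\Omega u_q^p$ gives $\ln\!\int u_q^q-\ln\!\int u_q^p=O(|q-p|)$, and the elementary split
\[
\ln\|u_q\|_q-\ln\|u_q\|_p=\tfrac{1}{q}\bigl(\ln\!\int u_q^q-\ln\!\int u_q^p\bigr)+\bigl(\tfrac{1}{q}-\tfrac{1}{p}\bigr)\ln\!\int u_q^p
\]
shows both summands are $O(|q-p|)$. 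Substituting back into the displayed identity yields $\ln(\Lambda_q/\mu_{\lambda,q})=O(|q-p|)$, and exponentiating gives the claimed inequality $|\Lambda_q/\mu_{\lambda,q}-1|\leq K|q-p|$.
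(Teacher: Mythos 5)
Your proposal is correct, but for part (iii) you take a genuinely different and considerably heavier route than the paper. The paper's argument is essentially one line: with the sup-normalization (i.e. $s=\infty$, which is what its proof of the Proposition implicitly uses), the scaling (\ref{anymmu}) gives $\Lambda_{q}/\mu_{\lambda,q}=\Vert U_{q}\Vert_{q}^{q}/\Vert U_{q}\Vert_{p}^{p}$ with $U_{q}=u_{q}/\Vert u_{q}\Vert_{\infty}$, so $0\leq U_{q}\leq1$ and
\[
\left\vert \frac{\Lambda_{q}}{\mu_{\lambda,q}}-1\right\vert \leq\frac{1}{\Vert U_{q}\Vert_{p}^{p}}\int_{\Omega}\max_{0\leq t\leq1}\left\vert t^{q}-t^{p}\right\vert dx\leq K\left\vert q-p\right\vert ,
\]
using the elementary maximum already computed for (\ref{UqBound}) together with $\Vert U_{q}\Vert_{p}^{p}\rightarrow\Vert e_{p}\Vert_{p}^{p}>0$. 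You instead pass to logarithms, Taylor-expand $u_{q}^{q-p}-1$, and control the remainder via a uniform Hopf-type lower bound $u_{q}\geq c\,\operatorname{dist}(x,\partial\Omega)$. That works, but the Hopf bound (and the integrability of $(\ln\operatorname{dist})^{2}$) is unnecessary: for $\left\vert q-p\right\vert<p/2$ the remainder integrand on $\{u_{q}\leq1\}$ is bounded pointwise by $\tfrac12(q-p)^{2}\max_{0\leq t\leq1}t^{p/2}(\ln t)^{2}$, and on $\{u_{q}\geq1\}$ by a constant times $(q-p)^{2}$, so a pure calculus bound in the spirit of the paper's maximum trick suffices. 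What your route buys is generality: it treats $\mu_{\lambda,q}=\lambda\Vert u_{\lambda,q}\Vert_{s}^{q-p}$ for every $1\leq s\leq\infty$ as in Corollary \ref{method}, whereas the paper's identity $\Lambda_{q}=\mu_{\lambda,q}\Vert U_{q}\Vert_{q}^{q}/\Vert U_{q}\Vert_{p}^{p}$ is tied to $s=\infty$. Your parts (i) and (ii) coincide in substance with the paper's (test the equation with the solution itself and use (\ref{lambdamin}); then pass to the limit via Theorem \ref{mainTheo}).
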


\begin{proof}
Since $\lim\limits_{q\rightarrow p}\left\Vert U_{q}\right\Vert _{q}^{q}%
=\lim\limits_{q\rightarrow p}\left\Vert U_{q}\right\Vert _{p}^{p}=\left\Vert
e_{p}\right\Vert _{p}^{p}$, we have
%(recall that $U_{q}\rightarrow e_{p}$ uniformly) we have%
\[
\lambda_{p}\leq\frac{\left\Vert \nabla u_{\lambda,q}\right\Vert _{p}^{p}%
}{\left\Vert u_{\lambda,q}\right\Vert _{p}^{p}}=\dfrac{\lambda\left\Vert
u_{\lambda,q}\right\Vert _{q}^{q}}{\left\Vert u_{\lambda,q}\right\Vert
_{p}^{p}}=\Lambda_{q}=\lambda\left\Vert u_{\lambda,q}\right\Vert _{\infty
}^{q-p}\dfrac{\left\Vert U_{q}\right\Vert _{q}^{q}}{\left\Vert U_{q}%
\right\Vert _{p}^{p}}=\mu_{\lambda,q}\dfrac{\left\Vert U_{q}\right\Vert
_{q}^{q}}{\left\Vert U_{q}\right\Vert _{p}^{p}}%
\]
proving \textit{(i)} and \textit{(ii)}, since
\[
\lambda_{p}\leq\lim_{q\rightarrow p}\Lambda_{q}=\lim_{q\rightarrow p}%
\mu_{\lambda,q}\dfrac{\left\Vert U_{q}\right\Vert _{q}^{q}}{\left\Vert
U_{q}\right\Vert _{p}^{p}}=(\lim_{q\rightarrow p}\mu_{\lambda,q})\left(
\lim\limits_{q\rightarrow p^{-}}\frac{\left\Vert U_{q}\right\Vert _{q}^{q}%
}{\left\Vert U_{q}\right\Vert _{p}^{p}}\right)  =\lambda_{p}.
\]

Now, \textit{(iii)} follows since
\begin{align*}
\left\vert \frac{\Lambda_{q}}{\mu_{\lambda,q}}-1\right\vert  &  =\left\vert
\frac{\left\Vert U_{q}\right\Vert _{q}^{q}}{\left\Vert U_{q}\right\Vert
_{p}^{p}}-1\right\vert \\
&  =\frac{1}{\left\Vert U_{q}\right\Vert _{p}^{p}}\left\vert \int_{\Omega
}\left(  U_{q}^{q}-U_{q}^{p}\right)  dx\right\vert \leq\frac{1}{\left\Vert
U_{q}\right\Vert _{p}^{p}}\int_{\Omega}\max_{0\leqslant t\leqslant1}\left\vert
t^{q}-t^{p}\right\vert dx\leq K\left\vert p-q\right\vert ,
\end{align*}
taking into account that $\left\Vert U_{q}\right\Vert _{p}^{p}\rightarrow
\left\Vert e_{p}\right\Vert _{p}^{p}$ as $q\rightarrow p.$
\end{proof}

\begin{remark}
Since $\mu_{\lambda,q}\leq\lambda_{p}$ in the sub-linear case (see
\cite{BBEM}), the rate of convergence of both $\mu_{\lambda,q}$ and
$\Lambda_{q}$ to $\lambda_{p}$ is at least $O(p-q).$
\end{remark}

\section{An explicit estimate involving $L^{\infty}$ and $L^{1}$ norms of
$u_{\lambda,q}$}

Still regarding the approximation method of Corollary \ref{method} we remark
that $L^{s}$-normalization ($1\leq s<\infty$) is linked to robustness and
stability in a computational approach. In this sense, explicit estimates
involving $L^{\infty}$ and $L^{s}$ norms of $u_{\lambda,q}$ in terms of the
parameters $p,$ $N,$ $q$ and $\lambda$ are also useful.

We end this paper by presenting an explicit estimate involving $L^{\infty}$
and $L^{s}$ norms of $u_{\lambda,q}.$ Its proof is inspired by arguments based
on level set techniques developed in \cite{Ladyz} (see also \cite[Theor.
2]{Bandle}). We need the following lemma.

\begin{lemma}
\label{D}Let $D\subset\mathbb{R}^{N}$ be a bounded and smooth domain. Then,
\begin{equation}
\int_{D}|u|^{p}dx\leq\frac{|D|^{\frac{p}{N}}}{C_{N,p}}\int_{D}|\nabla
u|^{p}\,dx \label{cp*}%
\end{equation}
for all $u\in W_{0}^{1,p}(D)\setminus\{0\}$, where $C_{N,p}$ is given by
\begin{equation}
C_{N,p}=N\omega_{N}^{\frac{p}{N}}\left(  \frac{p}{p-1}\right)  ^{p-1}
\label{CNp}%
\end{equation}
and $\omega_{N}$ is the volume of the unit ball in $\mathbb{R}^{N}.$
\end{lemma}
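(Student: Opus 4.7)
The strategy is to reduce to a ball via Schwarz symmetrization, then apply Picone's inequality (\ref{Picones}) to the explicit $p$-torsion function on that ball.

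First, since $|u| \in W_{0}^{1,p}(D)$ shares the $L^p$-norm of $u$ and $|\nabla|u|| = |\nabla u|$ a.e., I may assume $u \geq 0$. Let $u^{\#}$ be the Schwarz symmetric decreasing rearrangement of $u$ on the ball $B^{*}:=B_R$ with $R = (|D|/\omega_N)^{1/N}$ (so $|B_R|=|D|$). By rearrangement $\|u^{\#}\|_p = \|u\|_p$, and by P\'olya--Szeg\H{o} $\|\nabla u^{\#}\|_p \leq \|\nabla u\|_p$; thus it suffices to prove the inequality for $u^{\#}$ on $B_R$.

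The key tool on $B_R$ is the $p$-torsion function $\phi$ (the solution of (\ref{torsion}) with $\Omega=B_R$). By radial symmetry $\phi(r)$ satisfies $-(r^{N-1}|\phi'|^{p-2}\phi')' = r^{N-1}$ with $\phi'(0)=0$ and $\phi(R)=0$, which integrates explicitly to
\[
\phi(r) = \frac{p-1}{p}\, N^{-1/(p-1)}\left(R^{p/(p-1)} - r^{p/(p-1)}\right),
\]
so that
\[
\|\phi\|_{\infty}^{p-1} = \phi(0)^{p-1} = \frac{R^p}{N\,(p/(p-1))^{p-1}} = \frac{|D|^{p/N}}{C_{N,p}},
\]
after inserting $R^p = |D|^{p/N}\omega_N^{-p/N}$.

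Finally, I would apply Picone's inequality (\ref{Picones}) to $u^{\#}$ and $\phi$, integrate over $B_R$, and test the weak equation $-\Delta_p \phi = 1$ against $(u^{\#})^p/\phi^{p-1}$ to obtain
\[
\int_{B_R}|\nabla u^{\#}|^p\,dx \;\geq\; \int_{B_R}\frac{(u^{\#})^p}{\phi^{p-1}}\,dx \;\geq\; \|\phi\|_{\infty}^{-(p-1)}\int_{B_R}(u^{\#})^p\,dx,
\]
which, combined with the value of $\|\phi\|_{\infty}^{p-1}$ above, gives the claim. The main technical point is the admissibility of $(u^{\#})^p/\phi^{p-1}$ as a test function in $W_{0}^{1,p}(B_R)$: this is precisely the Hopf-boundary-lemma comparison already handled in Remark \ref{lopital}, and it can alternatively be bypassed by approximating $u^{\#}$ by compactly supported smooth functions.
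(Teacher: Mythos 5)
Your proof is correct, but it takes a genuinely different route from the paper. The paper (following \cite[Corollary 8]{BE}) never touches $u$: it observes that (\ref{cp*}) amounts to the lower bound $\lambda_{p}(D)\geq C_{N,p}\left\vert D\right\vert ^{-p/N}$, invokes the torsion-function bound (\ref{lblp}) for $D$ itself, and then controls $\left\Vert \phi_{p}\right\Vert _{\infty}$ by the maximum of the explicit torsion function of the equimeasurable ball via Talenti's comparison principle \cite{Talenti}; the symmetrization is applied to the torsion function, not to $u$. You instead symmetrize $u$ by P\'olya--Szeg\H{o}, reduce to the ball $B_{R}$, and run Picone's inequality (\ref{Picones}) directly against the ball's explicit torsion function, whose maximum you compute correctly (it agrees with $\Phi_{p}(0)$ in the paper, and the arithmetic giving $\left\Vert \phi\right\Vert _{\infty}^{p-1}=\left\vert D\right\vert ^{p/N}/C_{N,p}$ checks out). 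What each approach buys: yours avoids Talenti's comparison principle entirely, using only the more classical P\'olya--Szeg\H{o} inequality plus Picone, and is self-contained; the paper's avoids rearranging $u$, reuses the already-established estimate (\ref{lblp}), and works at the level of domain quantities (torsion functions), which is the form in which the result is cited. One small caveat in your final step: Remark \ref{lopital} does not literally apply, since $u^{\#}$ is only a $W_{0}^{1,p}$ function, not $C^{1}(\overline{B_{R}})$, so the Hopf/L'H\^{o}pital argument for the quotient is unavailable; the correct fix is the alternative you mention, namely approximating $u^{\#}$ (or $u$ itself) by nonnegative $\psi_{n}\in C_{0}^{\infty}$, for which $\psi_{n}^{p}/\phi^{p-1}$ is an admissible compactly supported test function because $\phi$ is bounded away from zero on $\operatorname{supp}\psi_{n}$, and then passing to the limit in the outer inequality $\int\left\vert \nabla\psi_{n}\right\vert ^{p}dx\geq\left\Vert \phi\right\Vert _{\infty}^{1-p}\int\psi_{n}^{p}dx$.
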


\begin{proof}
This result was proved in \cite[Corollary 8]{BE}. For completeness we outline
here its proof. Let $\lambda_{p}(D)$ denote the first eigenvalue of the
$p$-Laplacian in $D.$ We need only to verify that $C_{N,p}\left\vert
D\right\vert ^{-\frac{p}{N}}$ is a lower bound for $\lambda_{p}(D).$ It
follows from (\ref{lblp}) that $\left\Vert \phi_{p}\right\Vert _{\infty}%
^{1-p}$ is also a lower bound for $\lambda_{p}(D),$ where $\phi_{p}$ is the
$p$-torsion function of $D$ (as in (\ref{torsion}) with $\Omega$ replaced by
$D$)$.$ But, as consequence of Talenti's comparison principle (see
\cite{Talenti}) we have that $\phi_{p}^{\ast}\leq\Phi_{p}$ in $D^{\ast}$
where: $D^{\ast}$ is the ball in $\mathbb{R}^{N}$ centered at the origin and
such that $\left\vert D^{\ast}\right\vert =\left\vert D\right\vert ,$
$\phi_{p}^{\ast}$ is the Schwarz symmetrization of $\phi_{p}$ (see
\cite{Kawohl}) and $\Phi_{p}$ is the $p$-torsion function of $D^{\ast}.$
Therefore, since $\left\Vert \phi_{p}\right\Vert _{\infty}=\left\Vert \phi
_{p}^{\ast}\right\Vert _{\infty}$ it follows that $\left\Vert \Phi
_{p}\right\Vert _{\infty}^{1-p}$ is a lower bound for $\lambda_{p}(D).$ The
torsion function $\Phi_{p}$ is a radially decreasing function that can be
explicitly computed:%
\[
\Phi_{p}(\left\vert x\right\vert )=\frac{p-1}{p}N^{-\frac{1}{p-1}}\left(
R_{D}^{\frac{p}{p-1}}-\left\vert x\right\vert ^{\frac{p}{p-1}}\right)  ,\text{
\ \ }\left\vert x\right\vert \leq R_{D}%
\]
where $R_{D}:=(\left\vert D\right\vert /\omega_{N})^{1/N}$ is the radius of
the ball $D^{\ast}.$ Thus, one has
\[
\left\Vert \Phi_{p}\right\Vert _{\infty}^{1-p}=\left(  \Phi_{p}(0)\right)
^{1-p}=\left(  \frac{p-1}{p}N^{-\frac{1}{p-1}}R_{D}^{\frac{p}{p-1}}\right)
^{1-p}=C_{N,p}\left\vert D\right\vert ^{-\frac{p}{N}}.
\]

\end{proof}

\begin{theorem}
\label{linfty}Let $u_{\lambda,q}\in W_{0}^{1,p}\left(  \Omega\right)  $ be a
positive weak solution of the resonant Lane-Emden
\[
\left\{
\begin{array}
[c]{rcll}%
-\Delta_{p}u & = & \lambda\left\vert u\right\vert ^{q-2}u & \text{in }%
\Omega,\\
u & = & 0 & \text{on }\partial\Omega,
\end{array}
\right.
\]
where $\lambda>0$ and $1\leq q<p(\frac{N+1}{N}).$ Then, one has
\begin{equation}
\left\Vert u_{\lambda,q}\right\Vert _{\infty}\leq\left(  \lambda^{\frac{N}{p}%
}K_{N,p}\left\Vert u_{\lambda,q}\right\Vert _{1}\right)  ^{\frac{p}{p+N(p-q)}%
}, \label{x4}%
\end{equation}
where
\begin{equation}
K_{N,p}:=C_{N,p}^{-\frac{N}{p}}\left(  \frac{p+N(p-1)}{p}\right)
^{\frac{p+N(p-1)}{p}} \label{Knplamb}%
\end{equation}
and $C_{N,p}$ is defined by (\ref{CNp}).
\end{theorem}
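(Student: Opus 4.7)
The plan is to adapt the classical level--set method of Ladyzhenskaya--Stampacchia, using Lemma \ref{D} as the replacement for the usual Sobolev inequality. Set $M:=\left\Vert u_{\lambda,q}\right\Vert_{\infty}$ and, for $t\in[0,M)$, define
\[
A_{t}:=\{x\in\Omega:u_{\lambda,q}(x)>t\},\qquad \mu(t):=|A_{t}|,\qquad F(t):=\int_{A_{t}}(u_{\lambda,q}-t)\,dx=\int_{t}^{M}\mu(s)\,ds,
\]
where the last equality is the Cavalieri/layer--cake identity. Then $F$ is absolutely continuous on $[0,M]$ with $F(0)=\left\Vert u_{\lambda,q}\right\Vert_{1}$, $F(M)=0$, and $F'(t)=-\mu(t)$ for a.e.\ $t$.

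First I would use $\varphi=(u_{\lambda,q}-t)^{+}\in W_{0}^{1,p}(\Omega)$ as a test function in the weak formulation of the equation. Since $\nabla\varphi=\nabla u_{\lambda,q}\cdot\mathbf{1}_{A_{t}}$ and $u_{\lambda,q}^{q-1}\leq M^{q-1}$ (using $q\geq 1$), this gives
\[
\int_{A_{t}}|\nabla u_{\lambda,q}|^{p}dx=\lambda\int_{A_{t}}u_{\lambda,q}^{q-1}(u_{\lambda,q}-t)\,dx\leq \lambda M^{q-1}F(t).
\]
Next, writing $w_{t}:=(u_{\lambda,q}-t)^{+}\in W_{0}^{1,p}(A_{t})$, I would apply Lemma \ref{D} with $D=A_{t}$ to get $\int_{A_{t}}w_{t}^{p}dx\leq \mu(t)^{p/N}C_{N,p}^{-1}\int_{A_{t}}|\nabla w_{t}|^{p}dx$, and combine with the H\"older bound $F(t)^{p}=\bigl(\int_{A_{t}}w_{t}\,dx\bigr)^{p}\leq \mu(t)^{p-1}\int_{A_{t}}w_{t}^{p}dx$ to obtain, after cancelling one factor of $F(t)$,
\[
F(t)^{p-1}\leq \frac{\lambda M^{q-1}}{C_{N,p}}\,\mu(t)^{(N(p-1)+p)/N}.
\]

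Rearranging and using $\mu(t)=-F'(t)$ converts this into the first--order ODI
\[
-F'(t)\geq \left(\frac{C_{N,p}}{\lambda M^{q-1}}\right)^{\!N/(N(p-1)+p)}F(t)^{\beta},\qquad \beta:=\frac{N(p-1)}{N(p-1)+p}\in(0,1).
\]
Since $\beta<1$, the function $G:=F^{1-\beta}$ satisfies $G'(t)\leq -(1-\beta)\bigl(C_{N,p}/\lambda M^{q-1}\bigr)^{N/(N(p-1)+p)}$ a.e., and integrating over $[0,M]$ (using $G(M)=0$ and $G(0)=\|u_{\lambda,q}\|_{1}^{1-\beta}$) yields
\[
\left\Vert u_{\lambda,q}\right\Vert_{1}^{1-\beta}\geq (1-\beta)\left(\frac{C_{N,p}}{\lambda M^{q-1}}\right)^{\!N/(N(p-1)+p)}M.
\]

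Finally, setting $\gamma:=N(p-1)+p$ and raising to the $\gamma$--th power, the exponent of $M$ becomes $\gamma-N(q-1)=p+N(p-q)$, giving
\[
M^{p+N(p-q)}\leq \left(\tfrac{\gamma}{p}\right)^{\!\gamma}C_{N,p}^{-N}\lambda^{N}\left\Vert u_{\lambda,q}\right\Vert_{1}^{p}=\lambda^{N}K_{N,p}^{p}\left\Vert u_{\lambda,q}\right\Vert_{1}^{p},
\]
which is (\ref{x4}) after taking the $(p+N(p-q))$--th root, legitimate precisely because the hypothesis $q<p(N+1)/N$ ensures $p+N(p-q)>0$. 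The main technical obstacle is making the ODI argument rigorous despite $F$ possibly failing to be $C^{1}$ at critical values of $u_{\lambda,q}$; this is handled by the absolute continuity of $F$ from its integral representation, which validates the chain rule for $F^{1-\beta}$ and the integration step.
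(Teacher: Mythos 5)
Your proposal is correct and follows essentially the same route as the paper: testing with $(u_{\lambda,q}-t)^{+}$, combining H\"older's inequality with Lemma \ref{D} on $A_{t}$, and integrating the resulting differential inequality for $f(t)=\int_{A_{t}}(u_{\lambda,q}-t)\,dx$ via Cavalieri's principle. Your reformulation through $G=F^{1-\beta}$ is just a cosmetic repackaging of the paper's integration of $f^{-\beta}f'$, and the final algebra and the role of the hypothesis $q<p(N+1)/N$ coincide with the paper's.
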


\begin{proof}
Let us denote $u_{\lambda,q}$ simply by $u.$ For each $0<t<\left\Vert
u\right\Vert _{\infty},$ define
\[
A_{t}:=\left\{  x\in\Omega:u>t\right\}  .
\]

The function
\[
\left(  u-t\right)  ^{+}=\max\left\{  u-t,0\right\}  =\left\{
\begin{array}
[c]{ll}%
u-t, & \text{if \ }u>t,\\
0, & \text{if \ }u\leq t,
\end{array}
\right.
\]
belongs to $W_{0}^{1,p}\left(  \Omega\right)  $. Classical results
\cite{DiBenedetto, Lieberman, Tolks} guarantee that $u_{\lambda,q}\in
C^{1,\alpha}\left(  \overline{\Omega}\right)  $ for some $0<\alpha<1.$
Therefore, we have%
\begin{equation}
\int_{A_{t}}\left\vert \nabla u\right\vert ^{p}dx=\lambda\int_{A_{t}}%
u^{q-1}\left(  u-t\right)  dx\leq\lambda\left\Vert u\right\Vert _{\infty
}^{q-1}\int_{A_{t}}\left(  u-t\right)  dx. \label{x1}%
\end{equation}
(Note that $A_{t}$ is open and therefore $\nabla(u-t)^{+}=\nabla u$ in
$A_{t}.$)

Now, we estimate $\int_{A_{t}}\left\vert \nabla u\right\vert ^{p}dx$ from
below. For this, we apply H\"{o}lder's inequality and (\ref{cp*}) from Lemma
\ref{D} with $D=A_{t}$ to obtain%
\[
\left(  \int_{A_{t}}\left(  u-t\right)  dx\right)  ^{p}\leq|A_{t}|^{p-1}%
\int_{A_{t}}\left(  u-t\right)  ^{p}dx\leq\frac{|A_{t}|^{p-1}|A_{t}|^{\frac
{p}{N}}}{C_{N,p}}\int_{A_{t}}|\nabla u|^{p}\,dx.
\]

Thus,
\[
C_{N,p}|A_{t}|^{-\frac{p}{N}+1-p}\left(  \int_{A_{t}}\left(  u-t\right)
dx\right)  ^{p}\leq\int_{A_{t}}\left\vert \nabla u\right\vert ^{p}dx
\]
what yields, taking into account (\ref{x1}),%
\[
C_{N,p}|A_{t}|^{-\frac{p}{N}+1-p}\left(  \int_{A_{t}}\left(  u-t\right)
dx\right)  ^{p}\leq\lambda\left\Vert u\right\Vert _{\infty}^{q-1}\int_{A_{t}%
}\left(  u-t\right)  dx
\]
So we have%
\[
\left(  \int_{A_{t}}\left(  u-t\right)  dx\right)  ^{p-1}\leq\frac{\lambda
}{C_{N,p}}\left\Vert u\right\Vert _{\infty}^{q-1}\left\vert A_{t}\right\vert
^{\frac{p+N(p-1)}{N}}%
\]

This last inequality can be rewritten as%
\begin{equation}
\left(  \int_{A_{t}}\left(  u-t\right)  dx\right)  ^{\frac{N(p-1)}{p+N(p-1)}%
}\leq\left(  \frac{\lambda}{C_{N,p}}\left\Vert u\right\Vert _{\infty}%
^{q-1}\right)  ^{\frac{N}{p+N(p-1)}}\left\vert A_{t}\right\vert . \label{x2}%
\end{equation}

By defining
\[
f(t):=\int_{A_{t}}\left(  u-t\right)  dx,
\]
it follows from Cavalieri's Principle that%
\[
f(t)=\int_{t}^{\infty}\left\vert A_{s}\right\vert ds
\]
and therefore $f^{\prime}\left(  t\right)  =-\left\vert A_{t}\right\vert .$
Thus, (\ref{x2}) can be rewritten as
\begin{equation}
1\leq-\left(  \frac{\lambda}{C_{N,p}}\left\Vert u\right\Vert _{\infty}%
^{q-1}\right)  ^{\frac{N}{p+N(p-1)}}f\left(  t\right)  ^{-\frac{N(p-1)}%
{p+N(p-1)}}f^{\prime}\left(  t\right)  . \label{x3}%
\end{equation}

Integration of (\ref{x3}) yields
\begin{align*}
t  &  \leq(\lambda^{\frac{N}{p}}K_{N,p})^{\frac{p}{p+N(p-1)}}\left(
\left\Vert u\right\Vert _{\infty}\right)  ^{\frac{N(q-1)}{p+N(p-1)}}\left[
f(0)^{\frac{p}{p+N(p-1)}}-f(t)^{\frac{p}{p+N(p-1)}}\right] \\
&  \leq(\lambda^{\frac{N}{p}}K_{N,p})^{\frac{p}{p+N(p-1)}}\left(  \left\Vert
u\right\Vert _{\infty}\right)  ^{\frac{N(q-1)}{p+N(p-1)}}\left(  \left\Vert
u\right\Vert _{1}\right)  ^{\frac{p}{p+N(p-1)}}%
\end{align*}
where%
\[
K_{N,p}:=C_{N,p}^{-\frac{N}{p}}\left(  \frac{p+N(p-1)}{p}\right)
^{\frac{p+N(p-1)}{p}}.
\]

Making $t\rightarrow\left\Vert u\right\Vert _{\infty}$ we obtain
\[
\left\Vert u\right\Vert _{\infty}\leq(\lambda^{\frac{N}{p}}K_{N,p})^{\frac
{p}{p+N(p-1)}}\left(  \left\Vert u\right\Vert _{\infty}\right)  ^{\frac
{N(q-1)}{p+N(p-1)}}\left(  \left\Vert u\right\Vert _{1}\right)  ^{\frac
{p}{p+N(p-1)}}%
\]
and hence, by noting that
\[
0\leq\frac{N(q-1)}{p+N(p-1)}<1\Longleftrightarrow1\leq q<p\left(  \frac
{N+1}{N}\right)
\]
we obtain%
\[
\left(  \left\Vert u\right\Vert _{\infty}\right)  ^{\frac{p+N(p-q)}{p+N(p-1)}%
}\leq\left(  \lambda^{\frac{N}{p}}K_{N,p}\left\Vert u\right\Vert _{1}\right)
^{\frac{p}{p+N(p-1)}}%
\]
which gives (\ref{x4}).
\end{proof}

A well-known Sobolev inequality could be used instead of (\ref{cp*}) to prove
a similar estimate to (\ref{x4}). However, (\ref{cp*}) does not impose any
relation between $p$ and $N$ while the Sobolev inequality requires $1<p<N.$

Let us remark that the estimate (\ref{x4}) might, in principle, be used in the
analysis of nodal solutions of the Lane-Emdem problem. In fact, it is valid in
each nodal domain and does not depend on the Lebesgue volume of it.

\end{document}